\documentclass[amssymb,twoside,11pt]{article}
\thispagestyle{plain}\pagestyle{myheadings}
\markboth{\small{}}{\small{}}
\usepackage{latexsym,amsmath,graphicx}
\usepackage[dvipsnames]{xcolor}
\usepackage{placeins}
\usepackage{amsfonts}
\usepackage{enumitem}
\topmargin -0.3cm \evensidemargin 0.5cm \oddsidemargin 0.5cm
\textheight 8.8in \textwidth 6.0in
\newtheorem{theorem}{Theorem}[section]
\newtheorem{Lemma}[theorem]{{\bf Lemma}}

\newtheorem{rem}[theorem]{{\bf Remark}}
\newtheorem{ex}[theorem]{{\bf Example}}
\newtheorem{definition}{Definition}[section]
\numberwithin{equation}{section}
\newenvironment{proof}{\indent{\em Proof:}}{\quad \hfill
$\Box$\vspace*{2ex}}

\font\Bbb=msbm10 at 12pt

\newcommand{\R}{\mbox{\Bbb R}}

\setlength{\parindent}{0.25in} \setlength{\parskip}{0.05in}
\begin{document}
\setcounter{page}{1}
\begin{center}
\vspace{0.4cm} {\large{\bf  On Coupled  System of Nonlinear  $\Psi$-Hilfer Hybrid\\ Fractional  Differential Equations}} \\
\vspace{0.5cm}
Ashwini D. Mali  $^{1}$\\
maliashwini144@gmail.com\\

\vspace{0.35cm}
Kishor D. Kucche $^{2}$ \\
kdkucche@gmail.com \\

\vspace{0.35cm}
J. Vanterler da C. Sousa$^{3}$\\
vanterlermatematico@hotmail.com, jose.vanterler@edu.ufabc.br\\

\vspace{0.35cm}
$^{1,2}$ Department of Mathematics, Shivaji University, Kolhapur-416 004, Maharashtra, India.\\
$^{3}$ Centro de Matemática, Computação e Cognição, Universidade Federal do ABC,\\ Avenida dos Estados, 5001, Bairro Bangu, 09.210-580, Santo André, SP - Brazil
\vspace{0.5cm}

\end{center}

\def\baselinestretch{1.0}\small\normalsize

\begin{abstract}

This paper is dedicated to investigating the existence of solutions to the initial value problem (IVP) for a coupled system of $\Psi$-Hilfer hybrid fractional differential equations (FDEs) and boundary value problem (BVP) for a coupled system of $\Psi$-Hilfer hybrid FDEs. Analysis of the current paper depends on the two fixed point theorems involving three operators characterized on Banach algebra. In the view of an application, we provided concrete examples to exhibit the effectiveness of our achieved results. 

\end{abstract}
\noindent\textbf{Key words:} Coupled fractional differential equations;  $\Psi$-Hilfer fractional derivative; Existence of solution, Initial value problem; Boundary value problem.\\
\noindent
\textbf{2020 Mathematics Subject Classification:} 26A33, 34A38,  34A12, 34A08.
\def\baselinestretch{1.5}
\allowdisplaybreaks
\section{Introduction}
In many situations, a nonlinear differential equation may not be analyzed in its original form for the existence of a solution or to examine distinctive qualitative properties of the solution. In such cases, the perturbation of the original differential equation makes it simple to analyze various properties of the solution. Motivated by this reality, Dhage and Lakshmikantham \cite{Dhage1} initiated the study of quadratic perturbation of the second type which is well known as hybrid nonlinear differential equations. The fractional counterpart of hybrid differential equations with Riemann--Liouville (RL) derivative have been analyzed by Zhao et al. \cite{Zhao}, developed fractional differential inequalities, obtained existence of extremal solutions and proved comparison theorems. 

With the growth and greater demand for the theory of fractional differential equations (FDEs), the search for discussing properties of solutions of hybrid differential problems, has gained prominence and greater investigation both in the theoretical sense and involving applications \cite{Herzallah,HybAhmad2,Sitho,Caballero,Mahmudov}. For further reading that involves the existence of solutions of hybrid FDEs, we recommend  \cite{Mali4,  Lachouri, Matarhyb}.  Few fundamental works on boundary value problem of hybrid FDEs can be found in \cite{Sun1,HybAhmad3, Mohammadi,  Ahmadhyb1, Ji}. Numerous specialists have analyzed coupled hybrid FDEs  from a different point of view and researched the existence and uniqueness of the solution \cite{Bashirihyb,Alihyb,Baleanuhyb, Dhagehyb1, Ahmadhyb2, Buvaneswari}.

On the other hand,  the FDEs involving the most generalized fractional differential operator called $\Psi$-Hilfer fractional derivative \cite{Vanterler1} has attracted considerable attention from researchers. The basic analysis of various class of nonlinear $\Psi$-Hilfer FDEs relating to the existence and uniqueness of the solution, Ulam-Hyers stability, comparison theorems, extremal solution and comparison result concerning lower and upper solutions can be found in \cite{Mali1, Mali2, Mali3, Mali5, Kharade, MS Abdo2, Ahmadhyb, Jose1, Luo}.

The importance of hybrid FDEs in the field of nonlinear analysis leads to a reestablished flow of research activity in the area of FDEs. Motivated by this fact and the work referenced above, in the current paper, we consider the following two kinds of coupled hybrid FDEs involving the most generalized fractional differential operator called $\Psi$-Hilfer fractional derivative.
\begin{itemize}
\item Initial value problem (IVP) for coupled system of $\Psi$-Hilfer hybrid FDEs:\\
\begin{align}~\label{eqq611}
\begin{cases}
& ^H \mathcal{D}^{\mu,\,\nu\,;\, \Psi}_{0^+}\left[ \dfrac{y(t)-w(t, y(t))}{ u(t, y(t))}\right] 
  = v\left(t, x(t), k\, \mathcal{I} ^{\mu\,;\, \Psi}_{0^+}x(t)\right),~a.e. ~t \in  (0,\,T],  \\
    & \lim\limits_{t\to 0+}\left( \Psi \left( t\right) -\Psi \left( 0\right) \right)^{1-\xi }y(t)=y_{0} \in\R,\\
 \end{cases}
  \end{align}
  and
  \begin{align}\label{eqq612}
 \begin{cases}  
   & ^H \mathcal{D}^{\mu,\,\nu\,;\, \Psi}_{0^+}\left[ \dfrac{x(t)-w(t, x(t))}{ u(t, x(t))}\right] 
      = v\left(t, y(t), k\, \mathcal{I} ^{\mu\,;\, \Psi}_{0^+}  y(t)\right),~a.e. ~t \in  (0,\,T],  ~\\  
  & \lim\limits_{t\to 0+}\left( \Psi \left( t\right) -\Psi \left( 0\right) \right)^{1-\xi }x(t)=y_{0} \in\R,
  \end{cases}
 \end{align}
where $ 0<\mu<1, 0\leq\nu\leq 1,\,\xi=\mu+\nu(1-\mu)(0<\xi\leq 1)$, $^H \mathcal{D}^{\mu,\nu;\, \Psi}_{0^+}(\cdot)$ is the $\Psi$-Hilfer fractional derivative of order $\mu$ and type $\nu$, $u\in C(J \times \R  \,, \R\setminus\{0\}) $, $J=[0,T]$, $w\in C(J \times \R  \,, \R)$ and $v\in C(J \times \R\times \R  \,, \R) $.
\item Boundary value problem (BVP) for coupled system of $\Psi$-Hilfer hybrid FDEs:\\
\begin{align}\label{eq663}
 \begin{cases}
& ^H \mathcal{D}^{\mu,\,\nu\,;\, \Psi}_{0^+}\left[ \dfrac{y(t)-w_1(t, y(t), x(t))}{ u_1(t, y(t), x(t))}\right] 
= v_1\left(t, y(t), x(t)\right),~a.e. ~t \in  (0,\,T],  ~\\
& a\,\lim\limits_{t\to 0+}\left( \Psi \left( t\right) -\Psi \left( 0\right) \right)^{1-\xi }y(t)+b\,\lim\limits_{t\to T} \left( \Psi \left( t\right) -\Psi \left( 0\right) \right)^{1-\xi }y(t)=y_{0} \in\R,\\
\end{cases}
\end{align}
and
\begin{align}\label{eq664}
\begin{cases}  
& ^H \mathcal{D}^{\mu,\,\nu\,;\, \Psi}_{0^+}\left[ \dfrac{x(t)-w_2(t, y(t), x(t))}{ u_2(t, y(t), x(t))}\right] 
= v_2\left(t, y(t), x(t)\right),~a.e. ~t \in  (0,\,T],  ~\\
& a\,\lim\limits_{t\to 0+}\left( \Psi \left( t\right) -\Psi \left( 0\right) \right)^{1-\xi }x(t)+b\,\lim\limits_{t\to T} \left( \Psi \left( t\right) -\Psi \left( 0\right) \right)^{1-\xi }x(t)=y_{0} \in\R,
\end{cases}
\end{align} 
where $0<\mu<1, 0\leq\nu\leq 1,\, \xi=\mu+\nu(1-\mu)(0<\xi\leq 1)$,\,$^H \mathcal{D}^{\mu,\nu;\, \Psi}_{0^+}(\cdot)$ is the $\Psi$-Hilfer fractional derivative of order $\mu$ and type $\nu$, $J=[0,T]$,  $a\neq0$ and $b\in \R $ are the constants, $u_i\in C(J \times \R  \times \R \,, \R\setminus\{0\}) (i=1,2)$,    $w_i\in C(J \times \R  \times \R \,, \R)(i=1,2)$ and $v_i\in C(J \times \R\times \R  \,, \R)(i=1,2) $.
\end{itemize}

We investigate the existence of solutions to IVP for a coupled system of nonlinear $\Psi$-Hilfer hybrid {\rm {\rm FDEs}} \eqref{eqq611}-\eqref{eqq612}. Next, we derive the equivalent fractional integral equation to the  {\rm BVPs} for coupled system of $\Psi$-Hilfer hybrid {\rm FDEs} {\rm\eqref{eq663}-\eqref{eq664}} and establish an existence result for it. The existence results are established through the fixed point theorems involving three operators characterized on Banach algebra. Finally, we provide concrete examples in support of the results we derived.

It is seen that our principle results incorporate the investigation of the following research work in the literature: 
\begin{itemize}
\item  For $\nu=0, \Psi(t)=t, y_0=0$ and $k=0$,  the coupled system \eqref{eqq611}-\eqref{eqq612} includes the study of Bashiri et al. \cite{Bashirihyb} for the hybrid FDEs involving RL fractional derivative of the  form
\begin{align*}
\begin{cases}
& ^{RL} \mathcal{D}^{\mu}_{0^+}\left[ \dfrac{y(t)-w(t, y(t))}{ u(t, y(t))}\right] 
  = v\left(t, x(t)\right),~a.e. ~t \in  (0,\,T],  \\
   & ^{RL} \mathcal{D}^{\mu}_{0^+}\left[ \dfrac{x(t)-w(t, x(t))}{ u(t, x(t))}\right] 
      = v\left(t, y(t)\right),~a.e. ~t \in  (0,\,T],  ~\\  
  &y(0)=0,\quad x(0)=0.
  \end{cases}
 \end{align*}
\item For $\nu=1, \Psi(t)=t, u_1=0, u_2=0,\,  a=1, b=0$ and $y_0=0$,  the coupled system \eqref{eq663}-\eqref{eq664}  includes the study of Shah and Khan \cite{Shahhyb} for the hybrid FDEs involving Caputo fractional derivative ($\sigma=\rho=\mu$) of the  form
\begin{align*}
 \begin{cases}
& ^C \mathcal{D}^{\mu}_{0^+}\left[ y(t)-w_1(t, y(t), x(t))\right] 
= v_1\left(t, y(t), x(t)\right),~a.e. ~t \in  (0,\,T],  ~\\
& ^C \mathcal{D}^{\mu}_{0^+}\left[ x(t)-w_2(t, y(t), x(t))\right] 
= v_2\left(t, y(t), x(t)\right),~a.e. ~t \in  (0,\,T],  ~\\
&y(t)|_{t=0}=0, \quad x(t)|_{t=0}=0.
\end{cases}
\end{align*}

\end{itemize}
   
The structure of this paper is as follows. In section 2, we review a few essentials of $\Psi$-Hilfer fractional derivative and fixed point theorems for coupled frameworks. In section 3, we demonstrate an existence result for the coupled system of nonlinear $\Psi$-Hilfer hybrid {\rm {\rm FDEs}} \eqref{eqq611}-\eqref{eqq612}. Section 4 deals with the obtaining equivalent fractional integral equation to the  {\rm BVPs} for coupled system of $\Psi$-Hilfer hybrid {\rm FDEs} {\rm \eqref{eq663}-\eqref{eq664}} and to establish an existence result for it. In section 5,  two examples are provided to support the acquired outcomes.
%%%%%%%%%%%%%%%%%%%%%%%%%%%%%%%%%%%%%%%%%%%%%%%%%%%%%%%%%%%%%%%%%%%%%%%%%%%%%%%%%%%%%%%%%%%%%%%%%%%%%%%%%%%%%%%%%%%%%%%%%%%%%%%%%%%%%%%%%%%%%%%%%%%%%%%%%%%%

\section{Preliminaries} \label{preliminaries} 
Let $[a,b]$ $(0<a<b<\infty)$ be a finite interval and $\Psi\in C^{1}([a,b],\mathbb{R})$ be an increasing function such that $\Psi'(t)\neq 0$, for all $~ t\in [a,b]$.  We consider the  weighted space \cite{Vanterler1} 
\begin{equation*}
C_{1-\xi ;\, \Psi }\left[ a,b\right] =\left\{ h\big| h:\left( a,b\right]
\rightarrow \mathbb{R}, h(a+) ~\text{exists}~ \text{and}~\left( \Psi \left( t\right) -\Psi \left(
a\right) \right) ^{1-\xi }h\left( t\right) \in C\left[ a,b\right]
\right\} ,\text{ }0< \xi \leq 1,
\end{equation*}
endowed with the norm
\begin{equation}\label{space1}
\left\Vert h\right\Vert _{C_{1-\xi ;\Psi }\left[ a,b\right] }=\underset{t\in \left[ a,b\right] 
}{\max }\left\vert \left( \Psi \left( t\right) -\Psi \left( a\right) \right)
^{1-\xi }h\left( t\right) \right\vert.
\end{equation}

\begin{definition} [\cite{Kilbas}]
Let  $h$ be an integrable function defined on $[a,b]$. Then the $\Psi$-Riemann-Liouville fractional integral of order $\mu>0 ~(\mu \in \R)$ of the function $h$ is given by 
\begin{equation}\label{P1}
I_{a^+}^{\mu \, ;\Psi }h\left( t\right) =\frac{1}{\Gamma \left( \mu
\right) }\int_{a}^{t}\Psi ^{\prime }\left( s\right) \left( \Psi \left(
t\right) -\Psi \left( s\right) \right) ^{\mu -1}h\left( s\right) ds.
\end{equation}
\end{definition}

\begin{definition} [\cite{Vanterler1}] The $\Psi$-Hilfer fractional derivative of a function $h$ of order $0<\mu<1$ and type $0\leq \nu \leq 1$, is defined by
$$^H \mathcal{D}^{\mu, \, \nu; \, \Psi}_{a^+}h(t)= I_{a^+}^{\nu ({1-\mu});\, \Psi} \left(\frac{1}{{\Psi}^{'}(t)}\frac{d}{dt}\right)I_{a^+}^{(1-\nu)(1-\mu);\, \Psi} h(t).$$
\end{definition}

\begin{Lemma}[\cite{Vanterler1, Kilbas}]\label{lema2} 
Let $\chi, \delta>0$ and $\rho>n$.  Then
\begin{enumerate}[topsep=0pt,itemsep=-1ex,partopsep=1ex,parsep=1ex]

\item [(i)] $\mathcal{I}_{a^+}^{\mu \, ;\,\Psi }\mathcal{I}_{a^+}^{\chi \, ;\,\Psi }h(t)=\mathcal{I}_{a^+}^{\mu+\chi \, ;\,\Psi }h(t)$.

\item [(ii)]$
\mathcal{I}_{a^+}^{\mu \, ;\,\Psi }\left( \Psi \left( t\right) -\Psi \left( a\right) \right) ^{\delta -1}=\dfrac{\Gamma \left( \delta \right) }{\Gamma \left(\mu +\delta \right) }\left( \Psi \left(t\right) -\Psi \left( a\right) \right) ^{\mu +\delta -1}.
$
\item [(iii)]$^{H}\mathcal{D}_{a^+}^{\mu ,\,\nu \, ;\,\Psi }\left( \Psi \left(t\right) -\Psi \left( a\right)
\right) ^{\xi -1}=0.
$
\end{enumerate}
\end{Lemma}

\begin{Lemma}[\cite{Vanterler1}]\label{teo1} 
If $h\in C^{n}[a,b]$, $n-1<\mu<n$ and $0\leq \nu \leq 1$, then
\begin{enumerate}[topsep=0pt,itemsep=-1ex,partopsep=1ex,parsep=1ex]

\item [(i)]$
I_{a^+}^{\mu \, ;\Psi }\text{ }^{H}\mathcal{D}_{a^+}^{\mu ,\nu \, ;\Psi }h\left( t\right) =h\left(t\right) -\overset{n}{\underset{k=1}{\sum }}\dfrac{\left( \Psi \left( t\right) -\Psi \left( a\right) \right) ^{\xi -k}}{\Gamma \left( \xi -k+1\right) }h_{\Psi }^{\left[ n-k\right] }I_{a^+}^{\left( 1-\nu \right) \left( n-\mu \right) \, ;\Psi }h\left( a\right)$,\\
where $h_{\Psi }^{\left[ n-k\right] }h(t)=\left( \dfrac{1}{\Psi'(t)}\dfrac{d}{dt}\right)^{n-k}h(t)$.

\item [(ii)]$
^{H}\mathcal{D}_{a^+}^{\mu ,\nu \, ;\Psi }I_{a^+}^{\mu \, ;\Psi }h\left( t\right)
=h\left( t\right).
$
\end{enumerate}
\end{Lemma}

\begin{Lemma}[\cite{Mali4}]\label{malihyb}
Let $0<\mu<1, ~0\leq\nu\leq 1,~ \xi=\mu+\nu(1-\mu)$, $f\in C(J \times \R  \,, \R\setminus\{0\}) $ is bounded, $J=[0,T]$ and   
$g\in \mathfrak{C}(J \times \R  \,, \R)= \{ h ~|~ \text{the map}~\omega \to h(\tau,\omega)\; \text{is  continuous } \;\text{for each} ~\tau$ and the map $\tau \to h(\tau,\omega) \;\text{is measurable} \;\text{for each} ~\omega \}$. A  function $y\in C_{1-\xi ;\, \Psi }(J,\,\R)$ is the solution of hybrid {\rm FDEs}
\begin{align}
& ^H \mathcal{D}^{\mu,\,\nu\,;\, \Psi}_{0^+}\left[ \frac{y(t)}{ f(t, y(t))}\right] 
 = g(t, y(t)),~a.e. ~t \in  (0,\,T],  ~\label{eqq1}\\
& \left( \Psi \left( t\right) -\Psi \left( 0\right) \right)^{1-\xi }y(t)|_{t=0}=y_{0} \in\R ,\label{eqq2}
\end{align}
if and only if it is solution of the following hybrid fractional integral equation {\rm (IE)}
 \begin{align}\label{a1}
 y(t)&=f(t,y(t))\left\lbrace  \frac{y_{0}}{f(0,y(0+))}\left( \Psi \left( t\right) -\Psi \left( 0\right) \right)^{\xi-1 }+\mathcal{I}_{0^+}^{\mu\,;\, \Psi}g(t,y(t))\right\rbrace ,~t\in(0,T].
 \end{align}
\end{Lemma}

\begin{definition} [\cite{Chang}] An element $(x, y )\in X\times X $ is called a coupled fixed point of a mapping $T:X\times X\rightarrow X$ if $T(x, y)=x$ and $T(y, x)=y$.
\end{definition}

\begin{Lemma}[\cite{Bashirihyb}] \label{hyb2}
Let  $S$ be a non-empty, closed, convex and bounded    subset of  the   Banach algebra $X$  and $\tilde{S}=S\times S$. Suppose that  $E, G:X\rightarrow X $ and $F:S\rightarrow X $ are three operators such that
\begin{itemize}[topsep=0pt,itemsep=-1ex,partopsep=1ex,parsep=1ex]

\item [(a)] $E$ and $G$ are Lipschitzian with a Lipschitz constants $\sigma$ and $\delta$ respectively;

\item [(b)] $F$ is completely continuous;

\item [(c)] $y=Ey\,Fx+Gy \implies y\in S~ \text{for all} ~x\in S$ and

\item [(d)] $4\,\sigma M+\delta <1 $ where $M=\sup\left\lbrace \left\|Bx \right\|: x\in  S \right\rbrace $.
\end{itemize}

Then, the operator equation $T(y, x)=Ey\,Fx+Gy$ has a at least one coupled fixed point  in $\tilde{S}$.
\end{Lemma}

\begin{Lemma}[\cite{Dhagehyb3}] \label{hyb3}
Let  $S^*$ be a non-empty, closed, convex and bounded    subset of  the   Banach space $E$ and let  $A, C:E\rightarrow E$ and $B:S^*\rightarrow E $ are three operators such that
\begin{itemize}[topsep=0pt,itemsep=-1ex,partopsep=1ex,parsep=1ex]

\item [(a)] $A$ and $C$ are Lipschitzian with a Lipschitz constants $K$ and $L$ respectively;

\item [(b)] $B$ is completely continuous;

\item [(c)] $y=Ay\,Bx+Cy \implies y\in S^*~ \text{for all} ~x\in S^*$ and

\item [(d)] $K\, M^*+ \,L <1 $ where $M^*=\sup\left\lbrace \left\|By \right\|: y\in  S^* \right\rbrace $.
\end{itemize}

Then, the operator equation $Ay\,By + Cy=y$ has a solution  in $S^*$.
\end{Lemma}
%%%%%%%%%%%%%%%%%%%%%%%%%%%%%%%%%%%%%%%%%%%%%%%%%%%%%%%%%%%%%%%%%%%%
\section{{\rm IVP} for Coupled system of Hyrid {\rm FDEs}}

An application of the Lemma \ref{malihyb} gives the equivalent fractional IE to the {\rm FDEs} \eqref{eqq611},  given in the following Lemma.

\begin{Lemma}\label{lem61}
A  function $y\in C_{1-\xi ;\, \Psi }\left( J,\mathbb{R}\right) $ is the solution of  the Cauchy problem for hybrid {\rm FDEs} 
\begin{align*}
\begin{cases}
& ^H \mathcal{D}^{\mu,\,\nu\,;\, \Psi}_{0^+}\left[ \dfrac{y(t)-w(t, y(t))}{ u(t, y(t))}\right] 
  = v\left(t, x(t), k\, \mathcal{I} ^{\mu\,;\, \Psi}_{0^+}x(t)\right),~a.e. ~t \in  (0,\,T],  \\
    & \lim\limits_{t\to 0+}\left( \Psi \left( t\right) -\Psi \left( 0\right) \right)^{1-\xi }y(t)=y_{0} \in\R,\\
 \end{cases}
  \end{align*}
if and only if it is solution of the following hybrid fractional IE
\begin{align*}
y(t)&= u(t,y(t))\left\lbrace  \frac{y_{0}}{u(0,y(0+))}\left( \Psi \left( t\right) -\Psi \left( 0\right) \right)^{\xi-1 }+\mathcal{I}_{0^+}^{\mu\,;\, \Psi}v\left(t, x(t), k\, \mathcal{I} ^{\mu\,;\, \Psi}_{0^+}x(t)\right)\right\rbrace\nonumber\\
&\qquad+ w(t,y(t)),~t\in(0,T].
\end{align*}
\end{Lemma}

We list the following  assumptions to prove the existence of solution  to the  coupled system of hybrid {\rm FDEs} \eqref{eqq611}-\eqref{eqq612}.
\begin{enumerate}[topsep=0pt,itemsep=-1ex,partopsep=1ex,parsep=1ex]
\item [{\bf (H1)}] The functions $u\in C\left(J\times\R, \R\setminus\{0\} \right) $ and $w\in C\left(J\times\R, \R \right) $ are bounded  and there exists constants $\sigma, \delta>0$ such that for all $p, q\in \R$ and $t\in J=[0, T]$, we have 
$$\left| u(t,p) - u(t,q)\right| \leq \sigma \left| p-q\right|$$
and 
$$\left| w(t,p) - w(t,q)\right| \leq \delta \left| p-q\right|.$$
  
\item [{\bf (H2)}] The function $ v \in C (J \times \R \times \R  \,, \R) $  and there exists a function $ g\in C_{1-\xi;\Psi}\left( J,\mathbb{R}\right)  $ such that
   $$
    \left| v(t, p, q)\right| \leq \left( \Psi \left( t\right) -\Psi \left( 0\right) \right)^{1-\xi } g(t),\,~a.e.\,\, t\in J~ \text{and}~  \,p, q\in \R.
   $$
   \end{enumerate}
 
 \title{Existence Theorem} 
\begin{theorem}\label{tha63.2} 
Assume that the  hypotheses {\normalfont{\bf (H1)}-{\bf (H2)}} hold. Then, the coupled system of nonlinear $\Psi$-Hilfer hybrid  {\rm FDEs} \eqref{eqq611}-\eqref{eqq612} has a solution $(y, x)\in C_{1-\xi ;\, \Psi }\left( J,\mathbb{R}\right)\times C_{1-\xi ;\, \Psi }\left( J,\mathbb{R}\right) $ provided
\begin{equation}\label{616}
4\, \sigma \left\lbrace \left| \frac{y_0\,}{u(0,y(0+))}\right| + \frac{\left( \Psi \left( T\right) -\Psi \left( 0\right) \right)^{\mu+1-\xi }  }{\Gamma(\mu+1)}\left\| g\right\|_{C_{1-\xi ;\, \Psi }\left( J,\mathbb{R}\right) } \right\rbrace +\delta <1.
\end{equation}
 \end{theorem}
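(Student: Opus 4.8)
The plan is to transform the coupled initial value problem into a coupled fixed point equation on a Banach algebra and then invoke the three-operator coupled fixed point theorem, Lemma \ref{hyb2}. By Lemma \ref{lem61}, a pair $(y,x)\in C_{1-\xi;\Psi}(J,\R)\times C_{1-\xi;\Psi}(J,\R)$ solves \eqref{eqq611}--\eqref{eqq612} if and only if
$$y(t)=u(t,y(t))\Big\{\frac{y_0}{u(0,y(0+))}(\Psi(t)-\Psi(0))^{\xi-1}+\mathcal{I}_{0^+}^{\mu;\Psi}v\big(t,x(t),k\,\mathcal{I}_{0^+}^{\mu;\Psi}x(t)\big)\Big\}+w(t,y(t))$$
together with the twin identity in which $y$ and $x$ are interchanged. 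Set $X=C_{1-\xi;\Psi}(J,\R)$, regarded as a Banach algebra, and define $E,G:X\to X$ by $Ey(t)=u(t,y(t))$ and $Gy(t)=w(t,y(t))$, and, for a closed ball $S=\{y\in X:\|y\|_{C_{1-\xi;\Psi}}\le R\}$ with $R$ to be fixed below, define $F:S\to X$ by $Fx(t)=\frac{y_0}{u(0,y(0+))}(\Psi(t)-\Psi(0))^{\xi-1}+\mathcal{I}_{0^+}^{\mu;\Psi}v(t,x(t),k\,\mathcal{I}_{0^+}^{\mu;\Psi}x(t))$. With these choices the coupled system of integral equations reads $y=Ey\,Fx+Gy$ and $x=Ex\,Fy+Gx$, so a coupled fixed point of $T(y,x)=Ey\,Fx+Gy$ is exactly a solution of \eqref{eqq611}--\eqref{eqq612}.

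Next I would verify the four hypotheses of Lemma \ref{hyb2}. For (a), the Lipschitz estimates in \textbf{(H1)}, after multiplication by the weight $(\Psi(t)-\Psi(0))^{1-\xi}$ and passing to the maximum over $J$, show $E$ and $G$ are Lipschitzian on $X$ with constants $\sigma$ and $\delta$. For (d), the growth bound in \textbf{(H2)} gives $|v(t,p,q)|\le(\Psi(t)-\Psi(0))^{1-\xi}g(t)\le\|g\|_{C_{1-\xi;\Psi}}$, and combining this with Lemma \ref{lema2}(ii) applied to $\mathcal{I}_{0^+}^{\mu;\Psi}(1)$ yields
$$\|Fx\|_{C_{1-\xi;\Psi}}\le\Big|\frac{y_0}{u(0,y(0+))}\Big|+\frac{(\Psi(T)-\Psi(0))^{\mu+1-\xi}}{\Gamma(\mu+1)}\|g\|_{C_{1-\xi;\Psi}}=:M$$
for every $x\in S$, so that the standing assumption \eqref{616} is precisely $4\sigma M+\delta<1$. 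For (c), if $y=Ey\,Fx+Gy$ for some $x\in S$, then \textbf{(H1)} furnishes constants $U,W>0$ with $|u(t,p)|\le U$ and $|w(t,p)|\le W$ on $J\times\R$, whence $\|y\|_{C_{1-\xi;\Psi}}\le UM+W(\Psi(T)-\Psi(0))^{1-\xi}$; choosing $R$ at least equal to this quantity makes $S$ invariant in the required sense while keeping it nonempty, closed, convex, and bounded.

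The remaining, and most delicate, hypothesis is (b), complete continuity of $F$ on $S$. Uniform boundedness of $F(S)$ is the estimate just displayed. Continuity of $F$ would follow from continuity of $v$ and the dominated convergence theorem applied to the $\Psi$-Riemann--Liouville integral, the domination being supplied by \textbf{(H2)}. For compactness one shows the family $\{(\Psi(\cdot)-\Psi(0))^{1-\xi}Fx:x\in S\}$ is equicontinuous on $[0,T]$: the singular term becomes, after multiplication by the weight, the constant $\frac{y_0}{u(0,y(0+))}$, while for $t_1<t_2$ the difference of the two weighted $\Psi$-fractional integrals is split into its overlap part and its boundary part, each estimated uniformly over $S$ through the kernel of $\mathcal{I}_{0^+}^{\mu;\Psi}$ and the bound $|v|\le\|g\|_{C_{1-\xi;\Psi}}$; the Arzel\`{a}--Ascoli theorem in $C_{1-\xi;\Psi}(J,\R)$ then gives relative compactness of $F(S)$. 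Once (a)--(d) are in hand, Lemma \ref{hyb2} produces a coupled fixed point $(y,x)\in S\times S$, and Lemma \ref{lem61} identifies it with a solution of \eqref{eqq611}--\eqref{eqq612}.

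The main obstacle I anticipate is exactly this equicontinuity estimate near $t=0$, where the interplay of the weight $(\Psi(t)-\Psi(0))^{1-\xi}$ with the $\Psi$-fractional kernel must be controlled so that the bound is uniform over $S$; a minor additional point is to keep track that the product $Ey\,Fx$ stays in $C_{1-\xi;\Psi}(J,\R)$, which holds because $Ey=u(\cdot,y)$ is bounded and continuous, so multiplying $Fx$ by it preserves the weighted continuity.
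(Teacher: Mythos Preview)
Your proposal is correct and follows essentially the same route as the paper: the same Banach algebra $X=C_{1-\xi;\Psi}(J,\mathbb{R})$, the same three operators $E$, $F$, $G$, the same closed ball $S$ (your bounds $U$, $W$ are the paper's $K_1$, $K_2$), and the same verification of the four hypotheses of Lemma~\ref{hyb2}, including the Arzel\`a--Ascoli argument for complete continuity of $F$. The equicontinuity estimate you flag as the delicate point is handled in the paper exactly as you outline, via the uniform bound $|v|\le\|g\|_{C_{1-\xi;\Psi}}$ and the explicit evaluation of the weighted $\Psi$-fractional integral of a constant.
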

\begin{proof} Let $X:=\left( C_{1-\xi ;\, \Psi }\left( J,\mathbb{R}\right) , \,\left\Vert \cdot\right\Vert _{C_{1-\xi ;\,\Psi }\left( J,\mathbb{R}\right) }\right) $.  Then $X$ is a Banach algebra with the product of vectors defined by $(xy)(t)=x(t)y(t),\,t\in  (0,T]$. Define,
$$
S=\{x\in X: \left\Vert x\right\Vert _{C_{1-\xi ;\,\Psi }\left( J,\mathbb{R}\right)  }\leq R\},
$$
where 
$$
R=K_1\left\lbrace \left| \frac{y_0\,}{u(0,x(0+))}\right| + \frac{\left( \Psi \left( T\right) -\Psi \left( 0\right) \right)^{\mu+1-\xi }  }{\Gamma(\mu+1)}\left\| g\right\|_{C_{1-\xi ;\, \Psi }\left( J,\mathbb{R}\right) } \right\rbrace + K_2\, \left( \Psi \left( T\right) -\Psi \left( 0\right) \right)^{1-\xi } 
$$
and $K_1>0$ and  $K_2>0$ are the constants such that   $\left| u(t, \cdot)\right| <K_1$  and $\left| w(t, \cdot)\right| <K_2$ for all $t\in J$.

Clearly, $S$ is non-empty, closed, convex and bounded subset of $X$.
If  $(y, x)\in S\times S=\tilde{S}$ is a solution of the coupled system of nonlinear $\Psi$-Hilfer hybrid {\rm FDEs} \eqref{eqq611}-\eqref{eqq612}, then  $(y, x)\in S\times S=\tilde{S}$ is a solution of the coupled system of fractional IEs
\begin{align}   \label{617}
\begin{cases}
y(t)&= u(t, y(t))\left\lbrace  \dfrac{y_{0}}{u(0, y(0+))}\left( \Psi \left( t\right) -\Psi \left( 0\right) \right)^{\xi-1 }+\mathcal{I}_{0^+}^{\mu\,;\, \Psi}v\left(t, x(t), k\, \mathcal{I} ^{\mu\,;\, \Psi}_{0^+}x(t)\right)\right\rbrace\\
 &\quad+ w(t, y(t)),\\
x(t)&= u(t, x(t))\left\lbrace  \dfrac{y_{0}}{u(0,x(0+))}\left( \Psi \left( t\right) -\Psi \left( 0\right) \right)^{\xi-1 }+ \mathcal{I}_{0^+}^{\mu\,;\, \Psi}
v\left(t, y(t), k\, \mathcal{I} ^{\mu\,;\, \Psi}_{0^+}  y(t)\right)\right\rbrace\\
&\quad+ w(t, x(t)),~t\in(0,T].
\end{cases}  
\end{align}  
   
Define three operators  $E, G:X\rightarrow X $ and $F:S\rightarrow X $ by
\begin{align*}
& Ey(t)= u(t,y(t)), \,t\in J;\\
&Fy(t)= \frac{y_0\,}{u(0,y(0+))} \left( \Psi \left( t\right) -\Psi \left( 0\right) \right)^{\xi-1 } + \mathcal{I}_{0^+}^{\mu\,;\, \Psi}v\left(t, y(t), k\, \mathcal{I} ^{\mu\,;\, \Psi}_{0^+}  y(t)\right), \, t\in (0,T];\\
& Gy(t)= w(t,y(t)), \,t\in J.
\end{align*}

Then, the coupled hybrid IEs in Eq.\eqref{617}  transformed into the coupled system of operator equations as 
\begin{align}\label{618}
\begin{cases}
y&= Ey\,Fx +Gy,\, y\in X,\\
x&= Ex\,Fy +Gx,\, x\in X.   
\end{cases}
\end{align}
Consider the mapping $T: \tilde{S}\to X, \,\tilde{S}=S\times S $ defined by    
$$
T(y, x)= Ey\,Fx +Gy, ~(y, x)\in \tilde{S}.
$$ 
Then the coupled system of operator  equations  \eqref{618} can be written as
$$
y=T(y, x)~\text{and}~x=T(x, y),~(y, x), (x, y)\in \tilde{S}.
$$ To prove that the mapping $T$ has coupled fixed point, we show that  
the operators $E$, $F$ and $G$ satisfies all the conditions of Lemma \ref{hyb2}. The proof is given in the several steps:

\textbf{Step 1:} $E, G:X\rightarrow X$  are Lipschitz operators.

Using the hypothesis {\bf (H1)}, we obtain
\begin{align*}
\left| \left( \Psi \left( t\right) -\Psi \left( 0\right) \right)^{1-\xi} \left(  Ex(t)-Ey(t) \right)   \right|
 &=\left| \left( \Psi \left( t\right) -\Psi \left( 0\right) \right)^{1-\xi }\left(  u(t,x(t))-u(t,y(t)) \right) \right| \\
 &\leq \sigma \left|\left( \Psi \left( t\right) -\Psi \left( 0\right) \right)^{1-\xi } \left( x(t)-y(t)\right) \right|\\
 &\leq \sigma \left\Vert x-y\right\Vert _{C_{1-\xi ;\,\Psi }\left( J,\mathbb{R}\right)  }.
\end{align*}
 This gives,
\begin{equation*}
\left\Vert Ex - Ey\right\Vert _{C_{1-\xi ;\,\Psi }\left( J,\mathbb{R}\right)  }\leq \sigma \left\Vert x-y\right\Vert _{C_{1-\xi ;\,\Psi }\left( J,\mathbb{R}\right)  }. 
\end{equation*}

Therefore, $E$ is Lipschitz operator with  Lipschitz constant $\sigma$. On the similar line one can verify    that  $G$ is Lipschitz operator. Let $\delta$ is  Lipschitz constant corresponding to operator $G$.

\textbf{Step 2:} $F:S\rightarrow X$  is completely continuous.

(i) $F:S\rightarrow X$ is continuous.

Let  $\{y_n\} $ be any sequence in $S$ such that $y_n \rightarrow y$ as $n\rightarrow \infty$  in $S$. We prove that  $Fy_n \rightarrow Fy $ as $n\rightarrow \infty$  in $S$.  Consider,
\begin{align*}
\left\Vert Fy_n-Fy\right\Vert _{C_{1-\xi ;\,\Psi }\left( J,\mathbb{R}\right)  }\nonumber
&=\underset{t\in J 
}{\max }\left\vert \left( \Psi \left( t\right) -\Psi \left( 0\right) \right)
^{1-\xi }\left(   Fy_n(t)-Fy(t)\right)   \right\vert\nonumber\\
&\leq \underset{t\in J }{\max }\frac{\left( \Psi \left( t\right) -\Psi \left( 0\right) \right)
^{1-\xi }}{\Gamma \left( \mu\right) } \int_{0}^{t}\Psi'(s)(\Psi(t)-\Psi(s))^{\mu-1} \times\\
&\quad\left\vert  v\left( s, y_n(s), k\, \mathcal{I} ^{\mu\,;\, \Psi}_{0^+}  y_n(s)\right) -v\left( s, y(s), k\, \mathcal{I} ^{\mu\,;\, \Psi}_{0^+}  y(s)\right)  \right\vert\,ds.
\end{align*}

By continuity of $v$ and Lebesgue dominated convergence theorem, from the above inequality,  we obtain
$$
\left\Vert Fy_n-Fy\right\Vert _{C_{1-\xi ;\,\Psi }\left( J,\mathbb{R}\right)  }\rightarrow 0 ~\text{ as}~ n\rightarrow\infty.
$$
This proves $F:S\rightarrow X$ is continuous.

(ii) $F(S)=\left\lbrace Fy: y\in S\right\rbrace $ is uniformly bounded.

Using hypothesis {\bf (H2)}, for any $y\in S$ and $t\in J$, we have
\begin{align*}
&\left\vert \left( \Psi \left( t\right) -\Psi \left( 0\right) \right)
^{1-\xi }Fy(t) \right\vert\\
&\leq \left\vert \frac{y_0}{u(0,y(0+))}\right\vert +\frac{\,\left( \Psi \left( t\right) -\Psi \left( 0\right) \right)^{1-\xi }}{\Gamma \left( \mu\right) }\int_{0}^{t}\Psi'(s)(\Psi(t)-\Psi(s))^{\mu-1} \left\vert v\left(s, y(s), k\, \mathcal{I} ^{\mu\,;\, \Psi}_{0^+}  y(s)\right)\right\vert \,ds\\
&\leq \left\vert \frac{y_0}{u(0,y(0+))}\right\vert +\frac{\,\left( \Psi \left( t\right) -\Psi \left( 0\right) \right)^{1-\xi }}{\Gamma \left( \mu\right) }\int_{0}^{t}\Psi'(s)(\Psi(t)-\Psi(s))^{\mu-1} (\Psi(s)-\Psi(0))^{1-\xi} g(s)\,ds\\
&\leq \left\vert \frac{y_0}{u(0,y(0+))}\right\vert +\left\| g\right\|_{C_{1-\xi ;\, \Psi }\left( J,\mathbb{R}\right) }\,\left( \Psi \left( t\right) -\Psi \left( 0\right) \right)^{1-\xi }\frac{\left( \Psi \left( t\right) -\Psi \left( 0\right) \right)^{\mu }}{\Gamma \left( \mu+1     \right) }   \\    
&\leq \left\vert \frac{y_0}{u(0,y(0+))}\right\vert +\frac{\left( \Psi \left( T\right) -\Psi \left( 0\right) \right)^{\mu+1-\xi }}{\Gamma \left( \mu+1     \right) } \left\| g\right\|_{C_{1-\xi ;\, \Psi }\left( J,\mathbb{R}\right) }. 
\end{align*}

Therefore, 
\begin{equation}\label{619}
\left\Vert Fy\right\Vert _{C_{1-\xi ;\,\Psi }\left( J,\mathbb{R}\right)  }\leq \left\vert \frac{y_0}{u(0,y(0+))}\right\vert +\frac{\left( \Psi \left( T\right) -\Psi \left( 0\right) \right)^{\mu+1-\xi }}{\Gamma \left( \mu+1 \right) } \left\| g\right\|_{C_{1-\xi ;\, \Psi }\left( J,\mathbb{R}\right) }.
\end{equation}

(iii) $F(S)$ is equicontinuous. 

Let any $y\in S$ and $t_1, t_2\in J $ with $t_1<t_2$. Then using hypothesis {\bf (H2)}, we have
\begin{align*}
&\left\vert \left( \Psi \left( t_2\right) -\Psi \left( 0\right) \right)
^{1-\xi }Fy(t_2)-\left( \Psi \left( t_1\right) -\Psi \left( 0\right) \right) ^{1-\xi }Fy(t_1) \right\vert\\
&=\left\vert\left\lbrace \frac{y_0}{u(0,y(0+))} +\frac{\left( \Psi \left( t_2\right) -\Psi \left( 0\right) \right)
^{1-\xi } }{\Gamma \left( \mu\right) }\int_{0}^{t_2}\Psi'(s)(\Psi(t_2)-\Psi(s))^{\mu-1} v\left(s, y(s), k\, \mathcal{I} ^{\mu\,;\, \Psi}_{0^+}  y(s)\right)\,ds\right\rbrace\right.\\
&\left.-\left\lbrace \frac{y_0}{u(0,y(0+))} +\frac{\left( \Psi \left( t_1\right) -\Psi \left( 0\right) \right)
^{1-\xi }}{\Gamma \left( \mu\right) }\int_{0}^{t_1}\Psi'(s)(\Psi(t_1)-\Psi(s))^{\mu-1} v\left(s, y(s), k\, \mathcal{I} ^{\mu\,;\, \Psi}_{0^+}  y(s)\right)\,ds\right\rbrace\right\vert\\
&\leq 
\left\vert   \frac{\left( \Psi \left( t_2\right) -\Psi \left( 0\right) \right)
^{1-\xi } }{\Gamma \left( \mu\right) }\int_{0}^{t_2}\Psi'(s)(\Psi(t_2)-\Psi(s))^{\mu-1} \left| v\left(s, y(s), k\, \mathcal{I} ^{\mu\,;\, \Psi}_{0^+}  y(s)\right)\right| \,ds\right.\\
&\left.\qquad-\frac{\left( \Psi \left( t_1\right) -\Psi \left( 0\right) \right)
^{1-\xi } }{\Gamma \left( \mu\right) }\int_{0}^{t_1}\Psi'(s)(\Psi(t_1)-\Psi(s))^{\mu-1} \left| v\left(s, y(s), k\, \mathcal{I} ^{\mu\,;\, \Psi}_{0^+}  y(s)\right)\right|\,ds\right\vert\\
&\leq 
\left\vert   \frac{\left( \Psi \left( t_2\right) -\Psi \left( 0\right) \right)
^{1-\xi } }{\Gamma \left( \mu\right) }\int_{0}^{t_2}\Psi'(s)(\Psi(t_2)-\Psi(s))^{\mu-1}  (\Psi(s)-\Psi(0))^{1-\xi} g(s)\,ds\right.\\
&\left.\qquad-\frac{\left( \Psi \left( t_1\right) -\Psi \left( 0\right) \right)
^{1-\xi } }{\Gamma \left( \mu\right) }\int_{0}^{t_1}\Psi'(s)(\Psi(t_1)-\Psi(s))^{\mu-1} (\Psi(s)-\Psi(0))^{1-\xi} g(s)\,ds\right\vert\\
&\leq 
\left\vert   \frac{\left( \Psi \left( t_2\right) -\Psi \left( 0\right) \right)
^{1-\xi }\left\| g\right\|_{C_{1-\xi ;\, \Psi }\left( J,\mathbb{R}\right) }}{\Gamma \left( \mu\right) }\int_{0}^{t_2}\Psi'(s)(\Psi(t_2)-\Psi(s))^{\mu-1} \,ds\right.\\
&\left.\qquad-\frac{\left( \Psi \left( t_1\right) -\Psi \left( 0\right) \right)
^{1-\xi }\left\| g\right\|_{C_{1-\xi ;\, \Psi }\left( J,\mathbb{R}\right) } }{\Gamma \left( \mu\right) }\int_{0}^{t_1}\Psi'(s)(\Psi(t_1)-\Psi(s))^{\mu-1}\,ds\right\vert\\
&= \frac{\left\| g\right\|_{C_{1-\xi ;\, \Psi }\left( J,\mathbb{R}\right) }}{\Gamma \left( \mu+1\right) }\left\lbrace  (\Psi(t_2)-\Psi(0))^{\mu+1-\xi} - (\Psi(t_1)-\Psi(0))^{\mu+1-\xi} \right\rbrace.
\end{align*}

By the  continuity of  $\Psi$, from the above inequality it follows that $$
\text{if}\left|t_1-t_2 \right|\to 0 ~\text{then}\left| \left( \Psi \left( t_2\right) -\Psi \left( 0\right) \right)
^{1-\xi }Fy(t_2)-\left( \Psi \left( t_1\right) -\Psi \left( 0\right) \right) ^{1-\xi }Fy(t_1)\right|\to 0.
$$ 

From the parts (ii) and (iii), it follows that $F(S)$ is uniformly bounded and equicontinous set in $X$. Then by Arzel$\acute{a}$-Ascoli theorem, $F(S)$ is relatively compact. We have proved that, $F:S\rightarrow X$ is a compact operator. Since $F:S\rightarrow X$ is the continuous and compact operator, it is completely continuous.

\textbf{Step 3:} For $y\in X,$~$y=Ey\,Fx+Gy \implies y\in S,~ \text{for all} ~x\in S$.

Let any $y\in X$ and $x\in S$   such that $y=Ey\,Fx+Gy$. Using the hypothesis {\bf (H2)} and bounds  of $u$ and $w$, for any $t\in J$, we have
\begin{align*}
&\left\vert \left( \Psi \left( t\right) -\Psi \left( 0\right) \right)
^{1-\xi }y(t) \right\vert\\
&=\left\vert \left( \Psi \left( t\right) -\Psi \left( 0\right) \right)
^{1-\xi }\left[ Ey(t)\,Fx(t) + Gy(t)\right] \right\vert\\
&=\left\vert \left( \Psi \left( t\right) -\Psi \left( 0\right) \right)
^{1-\xi }\times\right.\\
&\left.~~\left[ u(t,y(t))\left\lbrace  \frac{y_{0}}{u(0,x(0+))}\left( \Psi \left( t\right) -\Psi \left( 0\right) \right)^{\xi-1 }+\mathcal{I}_{0^+}^{\mu\,;\, \Psi}v\left(t, x(t), k\, \mathcal{I} ^{\mu\,;\, \Psi}_{0^+}x(t)\right)\right\rbrace+ w(t,y(t))\right]  \right\vert\\
&=\left\vert u(t,y(t))\left\lbrace  \frac{y_{0}}{u(0,x(0+))}+\left( \Psi \left( t\right) -\Psi \left( 0\right) \right)
^{1-\xi } \mathcal{I}_{0^+}^{\mu\,;\, \Psi}v\left(t, x(t), k\, \mathcal{I} ^{\mu\,;\, \Psi}_{0^+}x(t)\right)\right\rbrace\right.\\
&\left.~~+ \left( \Psi \left( t\right) -\Psi \left( 0\right) \right)
^{1-\xi } w(t,y(t)) \right\vert\\
&\leq\left\vert u(t,y(t))\right\vert \left\lbrace \left\vert  \frac{y_{0}}{u(0,x(0+))}\right\vert+\frac{ \left( \Psi \left( t\right) -\Psi \left(0\right) \right)
^{1-\xi }}{\Gamma \left( \mu
\right) }\int_{0}^{t}\Psi'(s)(\Psi(t)-\Psi(s))^{\mu-1} \left\vert v\left(s, x(s), k\, \mathcal{I} ^{\mu\,;\, \Psi}_{0^+}x(s)\right)\right\vert\,ds\right\rbrace\\
&\quad+ \left( \Psi \left( t\right) -\Psi \left( 0\right) \right)
^{1-\xi } \left\vert w(t,y(t))\right\vert \\
&\leq K_1 \left\lbrace \left\vert  \frac{y_{0}}{u(0,x(0+))}\right\vert+\frac{ \left( \Psi \left( t\right) -\Psi \left( 0\right) \right)
^{1-\xi }}{\Gamma \left( \mu
\right) }\int_{0}^{t}\Psi'(s)(\Psi(t)-\Psi(s))^{\mu-1} \left( \Psi \left( s\right) -\Psi \left( 0\right) \right) ^{1-\xi } g(s)\,ds\right\rbrace\\
&\quad+ \left( \Psi \left( t\right) -\Psi \left( 0\right) \right)
^{1-\xi } K_2 \\
&\leq K_1 \left\lbrace \left\vert  \frac{y_{0}}{u(0,x(0+))}\right\vert+\frac{ \left( \Psi \left( T\right) -\Psi \left( 0\right) \right)
^{\mu+1-\xi }\,}{\Gamma \left( \mu+1
\right) }\left\| g\right\|_{C_{1-\xi ;\, \Psi }\left( J,\mathbb{R}\right) }\right\rbrace+ \left( \Psi \left( T\right) -\Psi \left( 0\right) \right)
^{1-\xi } K_2 . 
\end{align*}

This gives
\begin{align*}
\left\Vert y\right\Vert _{C_{1-\xi ;\,\Psi }\left( J,\mathbb{R}\right)  }&\leq K_1 \left\lbrace \left\vert  \frac{y_{0}}{u(0,x(0+))}\right\vert+\frac{ \left( \Psi \left( T\right) -\Psi \left( 0\right) \right)
^{\mu+1-\xi }\,\left\| g\right\|_{C_{1-\xi ;\, \Psi }\left( J,\mathbb{R}\right) }}{\Gamma \left( \mu+1
\right) }\right\rbrace\\
&\quad+ \left( \Psi \left( T\right) -\Psi \left( 0\right) \right)
^{1-\xi } K_2 =R. 
\end{align*}

This implies, $y\in S$.

\textbf{Step 4:} To prove $4\,\sigma M + \delta<1,$ where $M=\sup\left\lbrace \left\|Fy \right\|_{C_{1-\xi ;\,\Psi }\left( J,\mathbb{R}\right)}: y\in  S \right\rbrace $.

From inequality \eqref{619}, we have
\begin{align*}
M&=\sup\left\lbrace \left\|Fy \right\|_{C_{1-\xi ;\,\Psi }\left( J,\mathbb{R}\right)}: y\in  S \right\rbrace\\
&\leq \left\vert \frac{y_0}{u(0,y(0+))}\right\vert +\frac{\left( \Psi \left( T\right) -\Psi \left( 0\right) \right)^{\mu+1-\xi }}{\Gamma \left( \mu+1   \right) } \left\| g\right\|_{C_{1-\xi ;\, \Psi }\left( J,\mathbb{R}\right) }.
\end{align*}

Now, using the condition \eqref{616}, we have 
$$
4\,\sigma M + \delta\leq 4\,\sigma \left\lbrace \left\vert \frac{y_0}{u(0,y(0+))}\right\vert +\frac{\left( \Psi \left( T\right) -\Psi \left( 0\right) \right)^{\mu+1-\xi }}{\Gamma \left( \mu+1   \right) } \left\| g\right\|_{C_{1-\xi ;\, \Psi }\left( J,\mathbb{R}\right) }\right\rbrace  + \delta <1.
$$
From {\bf Steps 1} to {\bf 4}, it follows that all the conditions of Lemma \ref{hyb2} are fulfilled. Consequently,  by applying Lemma \ref{hyb2}, the operator  $T$ has a coupled solution in  $\tilde{S}=S\times S$. Hence, the  coupled system of hybrid {\rm FDEs} \eqref{eqq611}-\eqref{eqq612}  has a solution in $ C_{1-\xi ;\, \Psi }\left( J,\mathbb{R}\right)\times C_{1-\xi ;\, \Psi }\left( J,\mathbb{R}\right) $.
\end{proof}
%%%%%%%%%%%%%%%%%%%%%%%%%%%%%%%%%%%%%%%%%%%%%%%%%%%%%%%%%%%%%%%%%%%%%%%%%%%%%%%%%%%%%%%%%%%%%%%%%%%%%%%%%%%%%%%%%%%%%%%%%%%%%%%%%%%%%%%%%%%%%%%
\section{{\rm BVPs} for Coupled system of Hybrid {\rm FDEs}}
In this section, we are concerned with the  {\rm BVPs} for coupled system of $\Psi$-Hilfer hybrid {\rm FDEs} {\rm \eqref{eq663}-\eqref{eq664}}. Consider  the product space $E =X\times X,$~$X=C_{1-\xi ;\Psi }\left( J,\mathbb{R}\right)  $ with 
\begin{enumerate}

\item [(i)] vector addition: $(p, q)(t) + (\bar{p}, \bar{q})(t)=\left(  p(t)+\bar{p}(t), q(t)+\bar{q}(t)\right),
$
\item [(ii)]   scalar multiplication: 
$k\,(p, q)(t)=\left(  k\,p(t), k\,q(t)\right)$,
\end{enumerate} 
where,  $t\in J$, $p,\, q, \,\bar{p},\, \bar{q}\in X$ and $k\in \R$.
Then, $E $  is  a Banach algebra endowed with the norm
\begin{equation}\label{eqq664}
\left\Vert(p, q)\right\Vert _E =
\left\Vert p\right\Vert _{C_{1-\xi ;\Psi }\left( J,\mathbb{R}\right)  }+\left\Vert q \right\Vert _{C_{1-\xi ;\Psi }\left( J,\mathbb{R}\right)  }
\end{equation}
and the vector multiplication defined by
$$
(p, q)(t) \cdot (\bar{p}, \bar{q})(t)=\left(  p(t)\bar{p}(t), q(t)\bar{q}(t)\right),\, \text{for any}\,  (p, q), (\bar{p}, \bar{q})\in  E~ \text{and }\, t \in J.
$$
  
\begin{theorem} The {\rm BVP} for $\Psi$-Hilfer hybrid {\rm FDEs} 
 \begin{align}
   & ^H \mathcal{D}^{\mu,\,\nu\,;\, \Psi}_{0^+}\left[ \frac{y(t)-w_1(t, y(t), x(t))}{ u_1(t, y(t), x(t))}\right] 
      = v_1\left(t, y(t), x(t)\right),~a.e. ~t \in  (0,\,T],  ~\label{eqq661}\\
      & a\,\lim\limits_{t\to 0+}\left( \Psi \left( t\right) -\Psi \left( 0\right) \right)^{1-\xi }y(t)+b\,\lim\limits_{t\to T} \left( \Psi \left( t\right) -\Psi \left( 0\right) \right)^{1-\xi }y(t)=y_{0}\in\R,\label{eqq663}
    \end{align}
is   equivalent to the fractional IE
\begin{align} \label{eqq664a}
  y(t)&=w_1(t, y(t), x(t))\nonumber\\
  &\quad+ u_1(t, y(t), x(t))\left[ \left( \Psi \left( t\right) -\Psi \left( 0\right) \right)^{\xi-1 } \Omega_1+  \mathcal{I}^{\mu;\, \Psi}_{0^+} v_1(t, y(t), x(t))\right],\,t\in(0, T],
\end{align}
 where
\begin{small}
$$ \Omega_1=\frac{y_0- b\, \left( \Psi \left( T\right) -\Psi \left( 0\right) \right)^{1-\xi } \left(   w_1(T, y(T), x(T))+\,  u_1(T, y(T), x(T)) \,\mathcal{I}^{\mu;\, \Psi}_{0^+} v_1(T, y(T), x(T))\right)}{a\,u_1(0, y(0+), x(0+))+b\, u_1(T, y(T), x(T))}.
 $$
\end{small}
\end{theorem}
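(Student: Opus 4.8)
The plan is to establish both directions by the standard composition-rule method that underlies Lemma~\ref{malihyb}, the one new feature being that the nonlocal condition \eqref{eqq663}, rather than a one-point condition, is what pins down the free constant obtained on inverting the $\Psi$-Hilfer derivative. I will use throughout that $u_1$ never vanishes (so the quotient below is meaningful), that $u_1,w_1,v_1$ are continuous and bounded on the relevant sets, and that $a\,u_1(0,y(0^+),x(0^+))+b\,u_1(T,y(T),x(T))\neq 0$, the standing nondegeneracy assumption implicit in the definition of $\Omega_1$.

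\emph{Necessity.} Given a solution $(y,x)$ of \eqref{eqq661}--\eqref{eqq663}, I would set
\[
 h(t):=\frac{y(t)-w_1(t,y(t),x(t))}{u_1(t,y(t),x(t))},
\]
so that $h\in C_{1-\xi;\Psi}(J,\R)$ and ${}^{H}\mathcal{D}^{\mu,\nu;\Psi}_{0^+}h(t)=v_1(t,y(t),x(t))$ a.e. on $(0,T]$. Applying $\mathcal{I}^{\mu;\Psi}_{0^+}$ and Lemma~\ref{teo1}(i) with $n=k=1$ (so $h^{[0]}_{\Psi}=h$ and $\Gamma(\xi-k+1)=\Gamma(\xi)$) gives
\[
 h(t)=c\,\bigl(\Psi(t)-\Psi(0)\bigr)^{\xi-1}+\mathcal{I}^{\mu;\Psi}_{0^+}v_1(t,y(t),x(t)),\qquad c:=\tfrac1{\Gamma(\xi)}\bigl[\mathcal{I}^{(1-\nu)(1-\mu);\Psi}_{0^+}h\bigr](0^+)\in\R,
\]
and solving for $y$ reproduces \eqref{eqq664a} with the constant $c$ in place of $\Omega_1$. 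To identify $c$, I would multiply that identity by $\bigl(\Psi(t)-\Psi(0)\bigr)^{1-\xi}$: as $t\to 0^+$ the $w_1$-term vanishes ($w_1$ bounded, $1-\xi\ge 0$) and $\bigl(\Psi(t)-\Psi(0)\bigr)^{1-\xi}u_1\,\mathcal{I}^{\mu;\Psi}_{0^+}v_1$ vanishes because by Lemma~\ref{lema2}(ii) it is $O\bigl((\Psi(t)-\Psi(0))^{\mu+1-\xi}\bigr)$ with $\mu+1-\xi=1-\nu(1-\mu)>0$, so the limit at $0^+$ equals $c\,u_1(0,y(0^+),x(0^+))$; as $t\to T$ one obtains
\[
 \lim_{t\to T}\bigl(\Psi(t)-\Psi(0)\bigr)^{1-\xi}y(t)=\bigl(\Psi(T)-\Psi(0)\bigr)^{1-\xi}\bigl(w_1(T,y(T),x(T))+u_1(T,y(T),x(T))\,\mathcal{I}^{\mu;\Psi}_{0^+}v_1(T,y(T),x(T))\bigr)+c\,u_1(T,y(T),x(T)).
\]
Substituting both limits into \eqref{eqq663} and solving the resulting scalar linear equation for $c$ yields exactly $c=\Omega_1$, i.e. \eqref{eqq664a}.

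\emph{Sufficiency.} Conversely, given $y$ satisfying \eqref{eqq664a}, I would rewrite it as $\dfrac{y(t)-w_1(t,y(t),x(t))}{u_1(t,y(t),x(t))}=\Omega_1\bigl(\Psi(t)-\Psi(0)\bigr)^{\xi-1}+\mathcal{I}^{\mu;\Psi}_{0^+}v_1(t,y(t),x(t))$ and apply ${}^{H}\mathcal{D}^{\mu,\nu;\Psi}_{0^+}$; the first summand is annihilated by Lemma~\ref{lema2}(iii) and the second returns $v_1(t,y(t),x(t))$ by Lemma~\ref{teo1}(ii), recovering \eqref{eqq661}. For \eqref{eqq663}, I would multiply \eqref{eqq664a} by $\bigl(\Psi(t)-\Psi(0)\bigr)^{1-\xi}$, take the $t\to 0^+$ and $t\to T$ limits exactly as above, form $a\cdot(\text{limit at }0^+)+b\cdot(\text{limit at }T)$, and check that the defining formula for $\Omega_1$ is precisely what makes this combination equal $y_0$.

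The hard part will be the endpoint analysis of $\bigl(\Psi(t)-\Psi(0)\bigr)^{1-\xi}\mathcal{I}^{\mu;\Psi}_{0^+}v_1(t,\cdot,\cdot)$: one must combine boundedness of $v_1$ on compacta with the estimate $\bigl|\mathcal{I}^{\mu;\Psi}_{0^+}v_1(t,\cdot)\bigr|\le\|v_1\|_\infty(\Psi(t)-\Psi(0))^{\mu}/\Gamma(\mu+1)$ from Lemma~\ref{lema2}(ii), together with $\mu+1-\xi>0$, to conclude that this term is continuous on $J$ and vanishes at $t=0$; this, with the nondegeneracy of the denominator of $\Omega_1$, is what legitimizes both the solvability for $c$ in the necessity part and the endpoint evaluations in the sufficiency part. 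The remaining steps — the forward and backward use of the composition identities and the solution of a single scalar linear equation — are routine.
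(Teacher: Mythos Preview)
Your proposal is correct and follows essentially the same route as the paper's proof: apply $\mathcal{I}^{\mu;\Psi}_{0^+}$ together with Lemma~\ref{teo1}(i) to obtain the integral representation with a free constant, determine that constant from the boundary condition \eqref{eqq663} by multiplying through by $(\Psi(t)-\Psi(0))^{1-\xi}$ and taking the two endpoint limits, and for the converse apply ${}^H\mathcal{D}^{\mu,\nu;\Psi}_{0^+}$ using Lemma~\ref{lema2}(iii) and Lemma~\ref{teo1}(ii) before verifying the boundary condition. Your treatment of the $t\to 0^+$ endpoint (showing the $w_1$- and integral-terms vanish because $1-\xi\ge 0$ and $\mu+1-\xi>0$) is in fact more carefully justified than in the paper, where these limits are taken without comment.
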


\begin{proof} Let $y\in C_{1-\xi;\Psi}(J, \R)$ is a solution of the {\rm BVP} for $\Psi$-Hilfer hybrid {\rm FDEs} \eqref{eqq661}-\eqref{eqq663}. Taking $\mathcal{I}^{\mu;\, \Psi}_{0^+}$ on both sides of Eq.\eqref{eqq661} and using Lemma \ref{teo1} (i), we get
  \begin{small}
$$
\frac{y(t)-w_1(t, y(t), x(t))}{ u_1(t, y(t), x(t))}-\frac{\left( \Psi \left( t\right) -\Psi \left( 0\right) \right)^{\xi-1 }}{\Gamma(\xi)}\left[\mathcal{I}^{1-\xi;\, \Psi}_{0^+}\frac{y(t)-w_1(t, y(t), x(t))}{ u_1(t, y(t), x(t))} \right]_{t=0}=   \mathcal{I}^{\mu;\, \Psi}_{0^+} v_1(t, y(t), x(t)).
$$ 
\end{small}

Let $C^*=\left[\mathcal{I}^{1-\xi;\, \Psi}_{0^+}\dfrac{y(t)-w_1(t, y(t), x(t))}{ u_1(t, y(t), x(t))} \right]_{t=0}$. Thus, we have 
$$\frac{y(t)-w_1(t, y(t), x(t))}{ u_1(t, y(t), x(t))}=  \frac{\left( \Psi \left( t\right) -\Psi \left( 0\right) \right)^{\xi-1 }}{\Gamma(\xi)}\, C^*+ \mathcal{I}^{\mu;\, \Psi}_{0^+} v_1(t, y(t), x(t)).$$
 
Therefore,
\begin{equation}\label{eqq665}
y(t)=  w_1(t, y(t), x(t))+ u_1(t, y(t), x(t))\left[ \frac{\left( \Psi \left( t\right) -\Psi \left( 0\right) \right)^{\xi-1 }}{\Gamma(\xi)}\, C^*+ \mathcal{I}^{\mu;\, \Psi}_{0^+} v_1(t, y(t), x(t)) \right].
\end{equation}

Now, we find the value of $C^*$ using condition \eqref{eqq663}. Multiplying by $\left( \Psi \left( t\right) -\Psi \left( 0\right) \right)^{1-\xi }$ on both sides of Eq.\eqref{eqq665}, we get
\begin{align}\label{eqq666}
\left( \Psi \left( t\right) -\Psi \left( 0\right) \right)^{1-\xi }  y(t)&= \left( \Psi \left( t\right) -\Psi \left( 0\right) \right)^{1-\xi }  w_1(t, y(t), x(t))\nonumber\\
&+ u_1(t, y(t), x(t))\left[ \frac{C^*}{\Gamma(\xi)}\, + \left( \Psi \left( t\right) -\Psi \left( 0\right) \right)^{1-\xi }\,\mathcal{I}^{\mu;\, \Psi}_{0^+} v_1(t, y(t), x(t)) \right].
\end{align}

Taking limit as $t\to 0+$ in Eq.\eqref{eqq666}, we obtain
\begin{equation}\label{eqq667}
\lim\limits_{t\to 0+}\left( \Psi \left( t\right) -\Psi \left( 0\right) \right)^{1-\xi }    y(t) =    \frac{u_1(0, y(0+), x(0+))}{\Gamma(\xi)}\, C^*.
\end{equation}  

Further, taking limit as $t\to T$ in Eq.\eqref{eqq666}, we obtain
\begin{align}\label{eqq668}
  &\lim\limits_{t\to T} \left( \Psi \left( t\right) -\Psi \left( 0\right) \right)^{1-\xi }  y(t)\nonumber\\
   &= \left( \Psi \left( T\right) -\Psi \left( 0\right) \right)^{1-\xi }  w_1(T, y(T), x(T))\nonumber\\
    &\quad+ u_1(T, y(T), x(T))\left[ \frac{C^*}{\Gamma(\xi)}\, + \left( \Psi \left( T\right) -\Psi \left( 0\right) \right)^{1-\xi }\,\mathcal{I}^{\mu;\, \Psi}_{0^+} v_1(T, y(T), x(T)) \right].
\end{align}

Using Eqs.\eqref{eqq667}-\eqref{eqq668} in the Eq.\eqref{eqq663}, we get
  \begin{align*}
   y_0&= a\,\frac{u_1(0, y(0+), x(0+))}{\Gamma(\xi)}\, C^* +b\, \left( \Psi \left( T\right) -\Psi \left( 0\right) \right)^{1-\xi }  w_1(T, y(T), x(T))\nonumber\\
    &\quad+b\,  u_1(T, y(T), x(T))\left[ \frac{C^*}{\Gamma(\xi)}\, + \left( \Psi \left( T\right) -\Psi \left( 0\right) \right)^{1-\xi }\,\mathcal{I}^{\mu;\, \Psi}_{0^+} v_1(T, y(T), x(T)) \right]\\
    &=C^*\left[ a\,\frac{u_1(0, y(0+), x(0+))}{\Gamma(\xi)}
    + b\,\frac{u_1(T, y(T), x(T))}{\Gamma(\xi)}\right] \\
    &+b\, \left( \Psi \left( T\right) -\Psi \left( 0\right) \right)^{1-\xi } \left(  w_1(T, y(T), x(T))+
          u_1(T, y(T), x(T)) \,\mathcal{I}^{\mu;\, \Psi}_{0^+} v_1\left( T, y(T), x(T)\right) \right) .
        \end{align*}   

This gives,
  \begin{align*}
  C^*&=\frac{\Gamma(\xi)}{a\,u_1(0, y(0+), x(0+))+b\,u_1(T, y(T), x(T))}\times\\
  &~\left[ y_0-b\, \left( \Psi \left( T\right) -\Psi \left( 0\right) \right)^{1-\xi } \left(  w_1(T, y(T), x(T))+
            u_1(T, y(T), x(T)) \,\mathcal{I}^{\mu;\, \Psi}_{0^+} v_1\left( T, y(T), x(T)\right) \right)\right].
  \end{align*}

Putting value of $C^*$ in the Eq.\eqref{eqq665}, we obtain
   \begin{align*}
  &y(t)\\
  &=  w_1(t, y(t), x(t))+ u_1(t, y(t), x(t))\left\lbrace \left( \Psi \left( t\right) -\Psi \left( 0\right) \right)^{\xi-1 }\times\right.\\
  &\left. \frac{\left[ y_0 - b\, \left( \Psi \left( T\right) -\Psi \left( 0\right) \right)^{1-\xi } \left(   w_1(T, y(T), x(T))+\,  u_1(T, y(T), x(T)) \,\mathcal{I}^{\mu;\, \Psi}_{0^+} v_1(T, y(T), x(T))\right)  \right] }{a\,u_1(0, y(0+), x(0+))+b\,u_1(T, y(T), x(T))}\right.\\
  &\left.+ \mathcal{I}^{\mu;\, \Psi}_{0^+} v_1(t, y(t), x(t))\right\rbrace\\
  &=  w_1(t, y(t), x(t))+ u_1(t, y(t), x(t))\left\lbrace \left( \Psi \left( t\right) -\Psi \left( 0\right) \right)^{\xi-1 }\,\Omega_1+ \mathcal{I}^{\mu;\, \Psi}_{0^+} v_1(t, y(t), x(t))\right\rbrace,~ t\in(0,T],
    \end{align*}
which is the fractional IE \eqref{eqq664a}.

Conversely, let $y\in C_{1-\xi;\Psi}(J, \R)$ be a solution of the  Volterra IE \eqref{eqq664a}. The Eq.\eqref{eqq664a} can be rewritten as 
\begin{align*}
\frac{y(t)-  w_1(t, y(t), x(t))}{u_1(t, y(t), x(t))}=\left( \Psi \left( t\right) -\Psi \left( 0\right) \right)^{\xi-1 }\,\Omega_1+ \mathcal{I}^{\mu;\, \Psi}_{0^+} v_1(t, y(t), x(t)).
\end{align*}

Taking $\Psi$-Hilfer  fractional derivative $^H\mathcal{D}^{\mu,\,\nu\,;\, \Psi}_{0^+}$ on both sides and using Lemma \ref{lema2} (iii) and  Lemma \ref{teo1} (i), we obtain
$$ ^H \mathcal{D}^{\mu,\,\nu\,;\, \Psi}_{0^+}\left[ \frac{y(t)-w_1(t, y(t), x(t))}{ u_1(t, y(t), x(t))}\right] 
= v_1\left(t, y(t), x(t)\right),~a.e. ~t \in  (0,\,T],
$$
which is Eq.\eqref{eqq661}.  Multiplying IE \eqref{eqq664a} by $\left( \Psi \left( t\right) -\Psi \left( 0\right) \right)^{1-\xi }$, we obtain
\begin{align}\label{eqq669}
&\left( \Psi \left( t\right) -\Psi \left( 0\right) \right)^{1-\xi } y(t)\nonumber\\
&=\left( \Psi \left( t\right) -\Psi \left( 0\right) \right)^{1-\xi } w_1(t, y(t), x(t))\nonumber\\
&\quad+ u_1(t, y(t), x(t))\left\lbrace \,\Omega_1+\left( \Psi \left( t\right) -\Psi \left( 0\right) \right)^{1-\xi } \mathcal{I}^{\mu;\, \Psi}_{0^+} v_1(t, y(t), x(t))\right\rbrace,~ t\in J.
\end{align}

Taking limit as $t\to 0+$, from above Eq.\eqref{eqq669}, we obtain
\begin{equation}\label{eqq670}
\lim\limits_{t\to 0+}\left( \Psi \left( t\right) -\Psi \left( 0\right) \right)^{1-\xi } y(t)=u_1(0, y(0+), x(0+)) \,\Omega_1.
\end{equation}

Further, taking limit as $t\to T$, from  Eq.\eqref{eqq669}, we obtain
\begin{align}\label{eqq671}
&\lim\limits_{t\to T}\left( \Psi \left( t\right) -\Psi \left( 0\right) \right)^{1-\xi } y(t)\nonumber\\
&=\left( \Psi \left( T\right) -\Psi \left( 0\right) \right)^{1-\xi } w_1(T, y(T), x(T))\nonumber\\
&\quad +u_1(T, y(T), x(T))\left\lbrace \,\Omega_1+\left( \Psi \left( T\right) -\Psi \left( 0\right) \right)^{1-\xi} \mathcal{I}^{\mu;\, \Psi}_{0^+} v_1(T, y(T), x(T))\right\rbrace.
\end{align}

Using the Eqs.\eqref{eqq670}-\eqref{eqq671} and the value of $\Omega_1$, consider
\begin{align*}
&a\,\lim\limits_{t\to 0+}\left( \Psi \left( t\right) -\Psi \left( 0\right) \right)^{1-\xi } y(t)+b\,\lim\limits_{t\to T}\left( \Psi \left( t\right) -\Psi \left( 0\right) \right)^{1-\xi } y(t)\\
&= a\,u_1(0, y(0+), x(0+)) \,\Omega_1+b\,\left( \Psi \left( T\right) -\Psi \left( 0\right) \right)^{1-\xi } w_1(T, y(T), x(T))\nonumber\\
&\quad+ b\,u_1(T, y(T), x(T))\left\lbrace \,\Omega_1+\left( \Psi \left( T\right) -\Psi \left( 0\right) \right)^{1-\xi } \mathcal{I}^{\mu;\, \Psi}_{0^+} v_1(T, y(T), x(T))\right\rbrace\\
&=\left[ a\,u_1(0, y(0+), x(0+)) +b\,u_1(T, y(T), x(T))\right] \,\Omega_1+b\,\left( \Psi \left( T\right) -\Psi \left( 0\right) \right)^{1-\xi } w_1(T, y(T), x(T))\\
&\quad+ b\,u_1(T, y(T), x(T))\,\left( \Psi \left( T\right) -\Psi \left( 0\right) \right)^{1-\xi} \mathcal{I}^{\mu;\, \Psi}_{0^+} v_1(T, y(T), x(T))\\
&=y_0,
\end{align*}
which is the condition \eqref{eqq663}. This proves, $y\in C_{1-\xi;\Psi}(J, \R)$  is a solution of the {\rm BVP} for {\rm FDEs} involving $\Psi$-Hilfer  fractional derivative  \eqref{eqq661}-\eqref{eqq663}.
\end{proof}
  
To prove the existence of solution  to the {\rm BVPs} for coupled system of $\Psi$-Hilfer hybrid {\rm FDEs} {\rm \eqref{eq663}-\eqref{eq664}}, we need the following hypotheses  on $u_i$, $v_i$  and $w_i(i=1,2)$.

 \begin{enumerate}[topsep=0pt,itemsep=-1ex,partopsep=1ex,parsep=1ex]
  \item [{\bf (H3)}] The functions $u_i\in C(J \times \R  \times \R \,, \R\setminus\{0\}) (i=1,2)$,    $w_i\in C(J \times \R  \times \R \,, \R)(i=1,2)$  are bounded  and there exists constants $\sigma_i, \delta_i>0(i=1,2)$ such that for all $p, q, \bar{p}, \bar{q}\in \R,\,i=1,2$ and $t\in J=[0, T]$, we have 
   $$
    \left| u_i(t,p, q) - u_i(t,\bar{p}, \bar{q})\right| \leq \sigma_i\left(  \left| p-\bar{p}\right|+\left| q-\bar{q}\right|\right) 
    $$
    and 
    $$
     \left| w_i(t,p, q) - w_i(t,\bar{p}, \bar{q})\right| \leq \delta_i\left(  \left| p-\bar{p}\right|+\left| q-\bar{q}\right|\right).
        $$
  
  \item [{\bf (H4)}] The functions $v_i\in C(J \times \R\times \R  \,, \R)(i=1,2) $  and there exists a functions $ g_i\in C_{1-\xi;\Psi}\left( J,\mathbb{R}\right)  $ such that
   $$
    \left| v_i(t, p, q)\right| \leq \left( \Psi \left( t\right) -\Psi \left( 0\right) \right)^{1-\xi } g_i(t),\,~a.e.\,\, t\in J~ \text{and}~  \,p, q\in \R.
   $$
 
   \end{enumerate}
\begin{theorem}\label{tha64.2} Assume that the  hypotheses {\normalfont{\bf (H3)}-{\bf (H4)}} hold. Then, the {\rm BVPs} for coupled system of $\Psi$-Hilfer hybrid {\rm FDEs} {\rm \eqref{eq663}-\eqref{eq664}} has a solution $(y, x)\in E$ provided
\begin{small}
\begin{equation}\label{eqq672}
\left(\sum_{i=1}^{2} \sigma_i\right) \left[ \sum_{i=1}^{2}\left\vert \Omega_i\right|+\frac{\left( \Psi \left( T\right) -\Psi \left( 0\right) \right)^{\mu+1-\xi }}{\Gamma \left( \mu+1   \right) } \left(  \sum_{i=1}^{2}\left\| g_i\right\|_{C_{1-\xi ;\, \Psi }\left( J,\mathbb{R}\right) }\right)  \right]  +\sum_{i=1}^{2} \delta_i<1,
\end{equation}
 \end{small}
where
\begin{small}
\begin{equation}\label{ie61}
  \Omega_1=\frac{y_0- b\, \left( \Psi \left( T\right) -\Psi \left( 0\right) \right)^{1-\xi } \left(   w_1(T, y(T), x(T))+\,  u_1(T, y(T), x(T)) \,\mathcal{I}^{\mu;\, \Psi}_{0^+} v_1(T, y(T), x(T))\right)}{a\,u_1(0, y(0+), x(0+))+b\, u_1(T, y(T), x(T))}
\end{equation}
   \end{small}
and 
\begin{small}
\begin{equation}\label{ie62}
      \Omega_2=\frac{y_0- b\, \left( \Psi \left( T\right) -\Psi \left( 0\right) \right)^{1-\xi } \left(   w_2(T, y(T), x(T))+\,  u_2(T, y(T), x(T)) \,\mathcal{I}^{\mu;\, \Psi}_{0^+} v_2(T, y(T), x(T))\right)}{a\,u_2(0, y(0+), x(0+))+b\, u_2(T, y(T), x(T))}.
\end{equation}
\end{small}
\end{theorem}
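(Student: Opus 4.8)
The plan is to reformulate the coupled boundary value problem \eqref{eq663}-\eqref{eq664} as a single Dhage-type operator equation on the product Banach algebra $E=X\times X$ and then apply Lemma \ref{hyb3}. First I would apply the equivalence established just above (between the scalar problem \eqref{eqq661}-\eqref{eqq663} and the integral equation \eqref{eqq664a}) to each component of the system, treating the other unknown as a parameter entering the nonlinearities; this shows that $(y,x)\in E$ solves \eqref{eq663}-\eqref{eq664} if and only if
\begin{align*}
y(t)&=w_1(t,y(t),x(t))+u_1(t,y(t),x(t))\Bigl[\left(\Psi(t)-\Psi(0)\right)^{\xi-1}\Omega_1+\mathcal{I}^{\mu;\,\Psi}_{0^+}v_1(t,y(t),x(t))\Bigr],\\
x(t)&=w_2(t,y(t),x(t))+u_2(t,y(t),x(t))\Bigl[\left(\Psi(t)-\Psi(0)\right)^{\xi-1}\Omega_2+\mathcal{I}^{\mu;\,\Psi}_{0^+}v_2(t,y(t),x(t))\Bigr],
\end{align*}
for $t\in(0,T]$, with $\Omega_1,\Omega_2$ given by \eqref{ie61}-\eqref{ie62}; here one tacitly uses that the denominators $a\,u_i(0,y(0+),x(0+))+b\,u_i(T,y(T),x(T))$ do not vanish (automatic when $b=0$, since $a\neq0$ and $u_i$ is non-vanishing, and assumed otherwise).

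Next I would introduce the operators $A,C:E\to E$ and $B:S^{*}\to E$ by, for $z=(y,x)$ and $t\in J$,
$$Az(t)=\bigl(u_1(t,y(t),x(t)),\,u_2(t,y(t),x(t))\bigr),\qquad Cz(t)=\bigl(w_1(t,y(t),x(t)),\,w_2(t,y(t),x(t))\bigr),$$
$$Bz(t)=\Bigl(\left(\Psi(t)-\Psi(0)\right)^{\xi-1}\Omega_1+\mathcal{I}^{\mu;\,\Psi}_{0^+}v_1(t,y(t),x(t)),\ \left(\Psi(t)-\Psi(0)\right)^{\xi-1}\Omega_2+\mathcal{I}^{\mu;\,\Psi}_{0^+}v_2(t,y(t),x(t))\Bigr),$$
so that the coupled integral system above becomes exactly the operator equation $z=Az\cdot Bz+Cz$ on $E$, where $\cdot$ denotes the componentwise product that makes $E$ a Banach algebra. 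I would take $S^{*}=\{z\in E:\|z\|_{E}\le R\}$ with
$$R=K_u\Bigl[\sum_{i=1}^{2}|\Omega_i|+\frac{\left(\Psi(T)-\Psi(0)\right)^{\mu+1-\xi}}{\Gamma(\mu+1)}\sum_{i=1}^{2}\|g_i\|_{C_{1-\xi;\,\Psi}(J,\mathbb{R})}\Bigr]+2\,K_w\left(\Psi(T)-\Psi(0)\right)^{1-\xi},$$
where $K_u,K_w>0$ bound $|u_i|$ and $|w_i|$ on $J\times\mathbb{R}\times\mathbb{R}$; clearly $S^{*}$ is non-empty, closed, convex and bounded.

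I would then verify the four conditions of Lemma \ref{hyb3}, closely following the proof of Theorem \ref{tha63.2}. (a) Using {\bf (H3)} together with the norm \eqref{eqq664}, $A$ and $C$ are Lipschitzian with constants $K=\sigma_1+\sigma_2$ and $L=\delta_1+\delta_2$. (b) $B$ is completely continuous: continuity follows from the continuity of $v_1,v_2$, the Lebesgue dominated convergence theorem, and the continuous dependence of $\Omega_1,\Omega_2$ on the boundary data; uniform boundedness on $S^{*}$ follows from {\bf (H4)} as in the derivation of \eqref{619}, giving $\|Bz\|_{E}\le\sum_{i=1}^{2}|\Omega_i|+\frac{\left(\Psi(T)-\Psi(0)\right)^{\mu+1-\xi}}{\Gamma(\mu+1)}\sum_{i=1}^{2}\|g_i\|_{C_{1-\xi;\,\Psi}(J,\mathbb{R})}$; equicontinuity follows from the continuity of $\Psi$ via the estimate on $\left(\Psi(t_2)-\Psi(0)\right)^{\mu+1-\xi}-\left(\Psi(t_1)-\Psi(0)\right)^{\mu+1-\xi}$ used in Step 2(iii) of Theorem \ref{tha63.2}, and the Arzel\`{a}--Ascoli theorem then gives relative compactness. (c) If $z=Az\cdot Bz+Cz$, the same chain of estimates (the bounds $K_u,K_w$ for $u_i,w_i$ and {\bf (H4)}) gives $\|z\|_{E}\le R$, hence $z\in S^{*}$. (d) With $M^{*}=\sup\{\|Bz\|_{E}:z\in S^{*}\}$ bounded as in (b), hypothesis \eqref{eqq672} yields $KM^{*}+L<1$. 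Lemma \ref{hyb3} then provides a solution $z=(y,x)\in S^{*}$ of $z=Az\cdot Bz+Cz$, which is precisely a solution of the coupled system \eqref{eq663}-\eqref{eq664} in $E$.

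The step I expect to be the main obstacle is the complete continuity of $B$, and specifically the treatment of $\Omega_1,\Omega_2$: one must check that these depend continuously on $z=(y,x)$ through the boundary evaluations $y(0+),x(0+),y(T),x(T)$ and the fractional integrals $\mathcal{I}^{\mu;\,\Psi}_{0^+}v_i(T,y(T),x(T))$, and that they stay uniformly bounded on $S^{*}$, for which the boundedness of $u_i,w_i$ and the non-degeneracy of the denominator $a\,u_i(0,\cdot,\cdot)+b\,u_i(T,\cdot,\cdot)$ are needed; once this bookkeeping is settled, the remaining estimates are routine adaptations of those in the proof of Theorem \ref{tha63.2} to the product algebra $E$.
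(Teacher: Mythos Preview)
Your proposal is correct and follows essentially the same route as the paper: reformulate the system as the product operator equation $z=Az\cdot Bz+Cz$ on $E$ and apply Lemma~\ref{hyb3} with $K=\sigma_1+\sigma_2$, $L=\delta_1+\delta_2$, verifying (a)--(d) exactly as you outline. One small remark: condition~(c) of Lemma~\ref{hyb3} should read $z=Az\cdot B\bar z+Cz\Rightarrow z\in S^{*}$ for every $\bar z\in S^{*}$ (the paper does this with a separate $(\bar y,\bar x)$, obtaining $\bar\Omega_i$), and your caution about the $(y,x)$-dependence of $\Omega_1,\Omega_2$ in the continuity of $B$ is actually a point the paper's own Step~2(a) glosses over.
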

\begin{proof} Define, $$ S^*=\{(y, x)\in X\times X: \left\Vert (y, x)\right\Vert _E\leq R^*\},$$
where 
\begin{align*}
R^*&= M_1  \left\vert  \bar{ \Omega}_1\right\vert + M_2  \left\vert  \bar{ \Omega}_2\right\vert + \left( \Psi \left( T\right) -\Psi \left( 0\right) \right)^{1-\xi } \left[ N_1+N_2\right] \\
   &+\frac{ \left( \Psi \left( T\right) -\Psi \left( 0\right) \right)^{\mu+1-\xi }\,}{\Gamma \left( \mu+1\right) }\left[ M_1\,\left\| g_1\right\|_{C_{1-\xi ;\, \Psi }\left( J,\mathbb{R}\right) } + M_2\,\left\| g_2\right\|_{C_{1-\xi ;\, \Psi }\left( J,\mathbb{R}\right) }\right] 
\end{align*}  
and $M_i>0$ and  $N_i>0~(i=1,2)$ are the constants such that   $\left| u_i(t, \cdot, \cdot)\right| <M_i$ and $\left| w_i(t, \cdot, \cdot)\right| <N_i,$ for all $t\in J$. Clearly, $S^*$ is non-empty, closed, convex and bounded subset of $E=X\times X$.

If $(y, x)\in  S^*\subseteq X\times X$ is a solution of the coupled system of nonlinear $\Psi$-Hilfer hybrid {\rm FDEs} \eqref{eq663}-\eqref{eq664}, then it is a solution of the coupled system of fractional IEs   
 \begin{small}
\begin{align}\label{eqq673}
\begin{cases}
 & y(t)= u_1(t, y(t), x(t))\left[ \left( \Psi \left( t\right) -\Psi \left( 0\right) \right)^{\xi-1 } \Omega_1+  \mathcal{I}^{\mu;\, \Psi}_{0^+} v_1(t, y(t), x(t))\right]+w_1(t, y(t), x(t)),~t\in(0,T]\\
 &  x(t)= u_2(t, y(t), x(t))\left[ \left( \Psi \left( t\right) -\Psi \left( 0\right) \right)^{\xi-1 } \Omega_2+  \mathcal{I}^{\mu;\, \Psi}_{0^+} v_2(t, y(t), x(t))\right]+w_2(t, y(t), x(t)),~t\in(0,T],
\end{cases}
 \end{align}  
\end{small}
where $\Omega_1$  and $\Omega_2$ are defined in Eqs.\eqref{ie61} and \eqref{ie62} respectively.

 For $ i=1,2,$ define the operators $A_i:E\to X$, $B_i:S^*\to X$ and $C_i:E\to X$  by
\begin{align*}
 & A_1\left(y, x\right)(t) = u_1\left( t, y(t), x(t)\right), \,t\in J;\\
  & A_2\left(y, x\right)(t) = u_2\left( t, y(t), x(t)\right), \,t\in J;\\
  &B_1\left( y, x\right)(t) = \left( \Psi \left( t\right) -\Psi \left( 0\right) \right)^{\xi-1 } \Omega_1+  \mathcal{I}^{\mu;\, \Psi}_{0^+} v_1(t, y(t), x(t)),~t\in(0,T];\\
    &B_2\left( y, x\right)(t) = \left( \Psi \left( t\right) -\Psi \left( 0\right) \right)^{\xi-1 } \Omega_2 +  \mathcal{I}^{\mu;\, \Psi}_{0^+} v_2(t, y(t), x(t)),~t\in(0,T];\\
& C_1\left(y, x\right)(t) = w_1\left( t, y(t), x(t)\right), \,t\in J;\\
  & C_2\left(y, x\right)(t) = w_2\left( t, y(t), x(t)\right), \,t\in J.
\end{align*}
Then, the coupled system of  hybrid IEs in Eq.\eqref{eqq673} transformed into
\begin{align}\label{ak}
\begin{cases}
&A_1\left(y, x\right)(t)\, B_1\left( y, x\right)(t) + C_1\left(y, x\right)(t)= y(t),\,~t\in(0,T],\\
&A_2\left(y, x\right)(t)\, B_2\left( y, x\right)(t) + C_2\left(y, x\right)(t)= x(t),\,~t\in(0,T].
\end{cases}
\end{align}
Consider the operators, $A = (A_1, A_2) : E \rightarrow E$, $ B = (B_1, B_2) : S^* \rightarrow E$ and $ C = (C_1, C_2) : E \rightarrow E$. Then, the operator equations in \eqref{ak} can be written as
\begin{equation}\label{ak1}
A(y, x)(t)\,B(y, x)(t) + C(y, x)(t) = (y, x)(t),\, (y, x)\in E~ \text{and}~ t\in J.
\end{equation}
We prove  that the operators $A$, $B$ and $C$ satisfies all the conditions of Lemma \ref{hyb3}.  The proof is given in the following  series of steps.\\ \\
\noindent\textbf{Step 1:} $  A = (A_1, A_2) : E \rightarrow E$ and $ C = (C_1, C_2) : E \rightarrow E$ are Lipschitz operators.

For  any $(y, x), (\bar{y}, \bar{x})\in E$ and  $t\in J$, we obtain
\begin{align}\label{h1}
&\left\Vert  A(y, x)-A (\bar{y}, \bar{x}) \right\Vert _E\nonumber\\
&=\left\Vert  \left( A_1(y, x), A_2(y, x)\right) -\left( A_1 (\bar{y}, \bar{x}), A_2 (\bar{y}, \bar{x})\right)  \right\Vert  _E\nonumber\\
&=\left\Vert  \left( A_1(y, x) - A_1 (\bar{y}, \bar{x})\right),  \left( A_2(y, x) - A_2 (\bar{y}, \bar{x})\right)  \right\Vert  _E\nonumber\\
&=\left\Vert  A_1(y, x)-A_1 (\bar{y}, \bar{x}) \right\Vert _{C_{1-\xi ;\,\Psi }\left( J,\mathbb{R}\right)  }+\left\Vert  A_2(y, x)-A_2 (\bar{y}, \bar{x}) \right\Vert _{C_{1-\xi ;\,\Psi }\left( J,\mathbb{R}\right)  }.
\end{align}

Now,  using the hypothesis {\bf (H3)}, we obtain
\begin{align*}
&\left| \left( \Psi \left( t\right) -\Psi \left( 0\right) \right)^{1-\xi} \left(  A_1(y, x)(t)-A_1 (\bar{y}, \bar{x})(t) \right)   \right|\\
 &=\left| \left( \Psi \left( t\right) -\Psi \left( 0\right) \right)^{1-\xi }\left( u_1\left( t, y(t), x(t)\right)-u_1\left( t, \bar{y}(t), \bar{x}(t)\right) \right) \right|\nonumber \\
 &\leq \sigma_1 \left( \Psi \left( t\right) -\Psi \left( 0\right) \right)^{1-\xi } \left[ \left| y(t)-\bar{y}(t)\right| +\left|   x(t)-\bar{x}(t)\right|\right] \nonumber \\
 &\leq \sigma_1 \left[ \left\Vert y-\bar{y}\right\Vert _{C_{1-\xi ;\,\Psi }\left( J,\mathbb{R}\right)  }+ \left\Vert x-\bar{x}\right\Vert _{C_{1-\xi ;\,\Psi }\left( J,\mathbb{R}\right)  }\right].
\end{align*}

This gives,
\begin{equation}\label{h2}
\left\Vert  A_1(y, x)-A_1 (\bar{y}, \bar{x}) \right\Vert _{C_{1-\xi ;\,\Psi }\left( J,\mathbb{R}\right)  }\leq \sigma_1 \left[ \left\Vert y-\bar{y}\right\Vert _{C_{1-\xi ;\,\Psi }\left( J,\mathbb{R}\right)  }+ \left\Vert x-\bar{x}\right\Vert _{C_{1-\xi ;\,\Psi }\left( J,\mathbb{R}\right)  }\right]. 
\end{equation}
Similarly, we have 
\begin{equation}\label{h3}
\left\Vert  A_2(y, x)-A_2 (\bar{y}, \bar{x}) \right\Vert _{C_{1-\xi ;\,\Psi }\left( J,\mathbb{R}\right)  }\leq \sigma_2 \left[ \left\Vert y-\bar{y}\right\Vert _{C_{1-\xi ;\,\Psi }\left( J,\mathbb{R}\right)  }+ \left\Vert x-\bar{x}\right\Vert _{C_{1-\xi ;\,\Psi }\left( J,\mathbb{R}\right)  }\right]. 
\end{equation}

Using the inequalities \eqref{h2} and \eqref{h3}, from Eq.\eqref{h1}, we have
\begin{equation*}
\left\Vert  A(y, x)-A (\bar{y}, \bar{x}) \right\Vert _E\leq \left( \sigma_1+\sigma_2 \right)  \left[ \left\Vert y-\bar{y}\right\Vert _{C_{1-\xi ;\,\Psi }\left( J,\mathbb{R}\right)  }+ \left\Vert x-\bar{x}\right\Vert _{C_{1-\xi ;\,\Psi }\left( J,\mathbb{R}\right)  }\right].
\end{equation*}

Therefore, $A$ is Lipschitz operator with  Lipschitz constant $K=\sigma_1+\sigma_2$. On the similar line, it is easy to prove that  $C$ is Lipschitz operator. Let  $L=\delta_1+\delta_2$ is the  Lipschitz constant corresponding to the operator $C$.
\\ \\
\noindent\textbf{Step 2:} $B = (B_1, B_2) : S^* \rightarrow E$  is completely continuous.

(a) $B = (B_1, B_2) : S^* \rightarrow E$ is continuous.

Let  $\left( y_n, x_n\right)  $ be any sequence of points in $S^*$ such that $\left( y_n, x_n\right) \rightarrow \left( y , x\right)$ as $n\rightarrow \infty$  in $S^*$. We prove that  $B\left( y_n, x_n\right) \rightarrow B\left( y , x\right) $ as $n\rightarrow \infty$  in $E$.  

Consider,
\begin{align*}
&\left\Vert B_1\left( y_n, x_n\right) - B_1\left( y , x\right) \right\Vert _{C_{1-\xi ;\,\Psi }\left( J,\mathbb{R}\right)  }\nonumber\\
&=\underset{t\in J 
}{\max }\left\vert \left( \Psi \left( t\right) -\Psi \left( 0\right) \right)
^{1-\xi }\left(   B_1\left( y_n, x_n\right)(t) - B_1\left( y , x\right) (t)\right)   \right\vert\nonumber\\
&\leq  \underset{t\in J }{\max }\frac{\left( \Psi \left( t\right) -\Psi \left( 0\right) \right)
^{1-\xi }}{\Gamma \left( \mu\right) } \int_{0}^{t}\Psi'(s)(\Psi(t)-\Psi(s))^{\mu-1} \left\vert  v_1\left( s, y_n(s), x_n(s)\right) -v_1\left( s, y(s), x(s)\right)  \right\vert\,ds.
\end{align*}

By continuity of  the  function   $v_1$ and the  Lebesgue dominated convergence theorem, from the above inequality,  we obtain
$$
\left\Vert B_1\left( y_n, x_n\right) - B_1\left( y , x\right)\right\Vert _{C_{1-\xi ;\,\Psi }\left( J,\mathbb{R}\right)  }\rightarrow 0 ~\text{ as}~ n\rightarrow\infty.
$$

On the similar line one can obtain
$$
\left\Vert B_2\left( y_n, x_n\right) - B_2\left( y , x\right)\right\Vert _{C_{1-\xi ;\,\Psi }\left( J,\mathbb{R}\right)  }\rightarrow 0 ~\text{ as}~ n\rightarrow\infty.
$$

Hence, $B\left( y_n, x_n\right) = \left( B_1\left( y_n, x_n\right), B_2\left( y_n, x_n\right) \right)  $ converges to  $B\left( y, x\right) = \left( B_1\left( y, x\right), B_2\left( y, x\right) \right)  $ as $n\rightarrow \infty$. 

This proves $B: S^* \rightarrow E$ is continuous. 

(b) $B(S^*)=\left\lbrace B\left(y, x\right) : \left(y, x\right) \in S^*\right\rbrace $ is uniformly bounded.

Using hypothesis {\bf (H4)}, for any $\left(y, x\right)\in S^*$ and $t\in J$, we have
\begin{align*}
&\left\vert \left( \Psi \left( t\right) -\Psi \left( 0\right) \right)
^{1-\xi }B_1\left(y, x\right) (t) \right\vert\\
 &\leq \left\vert \Omega_1\right| +\frac{\,\left( \Psi \left( t\right) -\Psi \left( 0\right) \right)^{1-\xi }}{\Gamma \left( \mu
       \right) }\int_{0}^{t}\Psi'(s)(\Psi(t)-\Psi(s))^{\mu-1} \left\vert v_1\left(s, y(s), x(s)\right)\right\vert \,ds\\
&\leq \left\vert \Omega_1\right| +\frac{\,\left( \Psi \left( t\right) -\Psi \left( 0\right) \right)^{1-\xi }}{\Gamma \left( \mu
       \right) }\int_{0}^{t}\Psi'(s)(\Psi(t)-\Psi(s))^{\mu-1} (\Psi(s)-\Psi(0))^{1-\xi} g_1(s)\,ds\\
&\leq \left\vert \Omega_1\right| +\left\| g_1\right\|_{C_{1-\xi ;\, \Psi }\left( J,\mathbb{R}\right) }\,\left( \Psi \left( t\right) -\Psi \left( 0\right) \right)^{1-\xi }\frac{\left( \Psi \left( t\right) -\Psi \left( 0\right) \right)^{\mu }}{\Gamma \left( \mu+1     \right) }   \\    
&\leq \left\vert \Omega_1\right|  +\frac{\left( \Psi \left( T\right) -\Psi \left( 0\right) \right)^{\mu+1-\xi }}{\Gamma \left( \mu+1     \right) } \left\| g_1\right\|_{C_{1-\xi ;\, \Psi }\left( J,\mathbb{R}\right) }. 
\end{align*}

Therefore, 
\begin{equation}\label{eqq675}
\left\Vert B_1\left(y, x\right)\right\Vert _{C_{1-\xi ;\,\Psi }\left( J,\mathbb{R}\right)  }\leq \left\vert \Omega_1\right| +\frac{\left( \Psi \left( T\right) -\Psi \left( 0\right) \right)^{\mu+1-\xi }}{\Gamma \left( \mu+1   \right) } \left\| g_1\right\|_{C_{1-\xi ;\, \Psi }\left( J,\mathbb{R}\right) },~ \text{for all}\, \left(y, x\right)\in S^*.
\end{equation}

Hence, $B_1$ is  uniformly bounded  on $S^*$. On the similar line, one can obtain
 \begin{equation}\label{eqq676}
 \left\Vert B_2\left(y, x\right)\right\Vert _{C_{1-\xi ;\,\Psi }\left( J,\mathbb{R}\right)  }\leq \left\vert \Omega_2\right| +\frac{\left( \Psi \left( T\right) -\Psi \left( 0\right) \right)^{\mu+1-\xi }}{\Gamma \left( \mu+1   \right) } \left\| g_2\right\|_{C_{1-\xi ;\, \Psi }\left( J,\mathbb{R}\right) },~ \text{for all}\, \left(y, x\right)\in S^*.
 \end{equation}

This proves $B_2$ is  uniformly bounded  on $S^*$.  Hence, the operator $B$ is  uniformly bounded on $S^*$.

(c) $B(S^*)=(B_1(S^*), B_2(S^*))$ is equicontinuous.

Let any $\left(y, x\right)\in S^*$ and $t_1, t_2\in J $ with $t_1<t_2$. Then, using hypothesis {\bf (H4)}, we have
\begin{align*}
&\left\vert \left( \Psi \left( t_2\right) -\Psi \left( 0\right) \right)
^{1-\xi }B_1\left(y, x\right)(t_2)-\left( \Psi \left( t_1\right) -\Psi \left( 0\right) \right) ^{1-\xi }B_1\left(y, x\right)(t_1) \right\vert\\
&\leq 
\left\vert   \frac{\left( \Psi \left( t_2\right) -\Psi \left( 0\right) \right)
^{1-\xi } }{\Gamma \left( \mu\right) }\int_{0}^{t_2}\Psi'(s)(\Psi(t_2)-\Psi(s))^{\mu-1} \left| v_1\left(s, y(s), x(s)\right)\right| \,ds\right.\\
&\left.\qquad-\frac{\left( \Psi \left( t_1\right) -\Psi \left( 0\right) \right)
^{1-\xi } }{\Gamma \left( \mu\right) }\int_{0}^{t_1}\Psi'(s)(\Psi(t_1)-\Psi(s))^{\mu-1} \left| v_1\left(s, y(s), x(s)\right)\right|\,ds\right\vert\\
&\leq 
\left\vert   \frac{\left( \Psi \left( t_2\right) -\Psi \left( 0\right) \right)
^{1-\xi } }{\Gamma \left( \mu\right) }\int_{0}^{t_2}\Psi'(s)(\Psi(t_2)-\Psi(s))^{\mu-1}  (\Psi(s)-\Psi(0))^{1-\xi} g_1(s)\,ds\right.\\
&\left.\qquad-\frac{\left( \Psi \left( t_1\right) -\Psi \left( 0\right) \right)
^{1-\xi } }{\Gamma \left( \mu\right) }\int_{0}^{t_1}\Psi'(s)(\Psi(t_1)-\Psi(s))^{\mu-1} (\Psi(s)-\Psi(0))^{1-\xi} g_1(s)\,ds\right\vert\\
&\leq 
\left\vert   \frac{\left( \Psi \left( t_2\right) -\Psi \left( 0\right) \right)
^{1-\xi }\left\| g_1\right\|_{C_{1-\xi ;\, \Psi }\left( J,\mathbb{R}\right) }}{\Gamma \left( \mu\right) }\int_{0}^{t_2}\Psi'(s)(\Psi(t_2)-\Psi(s))^{\mu-1} \,ds\right.\\
&\left.\qquad-\frac{\left( \Psi \left( t_1\right) -\Psi \left( 0\right) \right)
^{1-\xi }\left\| g_1\right\|_{C_{1-\xi ;\, \Psi }\left( J,\mathbb{R}\right) } }{\Gamma \left( \mu\right) }\int_{0}^{t_1}\Psi'(s)(\Psi(t_1)-\Psi(s))^{\mu-1}\,ds\right\vert\\
&= 
  \frac{\left\| g_1\right\|_{C_{1-\xi ;\, \Psi }\left( J,\mathbb{R}\right) }}{\Gamma \left( \mu+1\right) }\left\lbrace  (\Psi(t_2)-\Psi(0))^{\mu+1-\xi} - (\Psi(t_1)-\Psi(0))^{\mu+1-\xi} \right\rbrace.
 \end{align*}

By the  continuity of  $\Psi$, from the above inequality it follows that
$$
\text{if}\left|t_1-t_2 \right|\to 0 ~\text{then}\left| \left( \Psi \left( t_2\right) -\Psi \left( 0\right) \right)
^{1-\xi }B_1\left(y, x\right)(t_2)-\left( \Psi \left( t_1\right) -\Psi \left( 0\right) \right) ^{1-\xi }B_1\left(y, x\right)(t_1)\right|\to 0,
$$ 
uniformly for all $\left(y, x\right)\in S^*$. Following the similar type of steps, we have 
$$
\text{if}\left|t_1-t_2 \right|\to 0 ~\text{then}\left| \left( \Psi \left( t_2\right) -\Psi \left( 0\right) \right)
^{1-\xi }B_2\left(y, x\right)(t_2)-\left( \Psi \left( t_1\right) -\Psi \left( 0\right) \right) ^{1-\xi }B_2\left(y, x\right)(t_1)\right|\to 0,
$$ 
uniformly for all $\left(y, x\right)\in S^*$.

From the parts (b) and (c), it follows that $B(S^*)$ is uniformly bounded and  equicontinous set in $E$. Then by Arzel$\acute{a}$-Ascoli theorem, $B(S^*)$ is relatively compact. Therefore, $B:S^*\rightarrow E$ is a compact operator. Since $B:S^*\rightarrow E$ is  continuous and compact operator, it is completely continuous.
\\ \\
\noindent\textbf{Step 3:} For $\left(y, x\right)\in E,$~$\left(y, x\right)=\left( A_1\left(y, x\right)\,B_1\left(\bar{y}, \bar{x}\right)+C_1\left(y, x\right), A_2\left(y, x\right)\,B_2\left(\bar{y}, \bar{x}\right)+C_2\left(y, x\right)\right) $
$\implies \left(y, x\right)\in S^*,~ \text{for all} ~\left(\bar{y}, \bar{x}\right)\in S^*$.

Let any $\left(y, x\right)\in E$ and $\left(\bar{y}, \bar{x}\right)\in S^*$   such that 
$$
\left(y, x\right)=\left( A_1\left(y, x\right)\,B_1\left(\bar{y}, \bar{x}\right)+C_1\left(y, x\right), A_2\left(y, x\right)\,B_2\left(\bar{y}, \bar{x}\right)+C_2\left(y, x\right)\right).
$$ 

Using the hypothesis {\bf (H4)} and boundedness of $u_1$ and $w_1$, for any $t\in J$, we have
\begin{align*}
&\left\vert \left( \Psi \left( t\right) -\Psi \left( 0\right) \right)
^{1-\xi }y(t) \right\vert\\
&=\left\vert \left( \Psi \left( t\right) -\Psi \left( 0\right) \right)
^{1-\xi }\left[ A_1\left(y, x\right)(t)\,B_1\left(\bar{y}, \bar{x}\right)(t) + C_1\left(y, x\right)(t)\right] \right\vert\\
&=\left\vert \left( \Psi \left( t\right) -\Psi \left( 0\right) \right)
^{1-\xi }\left[ u_1(t,y(t), x(t))\left\lbrace  \left( \Psi \left( t\right) -\Psi \left( 0\right) \right)^{\xi-1 }\,\bar{ \Omega}_1
+\mathcal{I}_{0^+}^{\mu\,;\, \Psi}v_1\left(t, \bar{y}(t), \bar{x}(t)\right)\right\rbrace\right.\right.\\
&\left.\left.~~+ w_1(t,y(t), x(t))\right]  \right\vert\\
&\leq\left\vert u_1(t,y(t), x(t))\right\vert \left\lbrace \left\vert  \bar{ \Omega}_1\right\vert+\frac{ \left( \Psi \left( t\right) -\Psi \left(0\right) \right)
^{1-\xi }}{\Gamma \left( \mu
\right) }\int_{0}^{t}\Psi'(s)(\Psi(t)-\Psi(s))^{\mu-1} \left\vert v_1\left(s, \bar{y}(s), \bar{x}(s)\right)\right\vert\,ds\right\rbrace\\
&\quad+ \left( \Psi \left( t\right) -\Psi \left( 0\right) \right)
^{1-\xi } \left\vert w_1(t,y(t), x(t))\right\vert \\
&\leq M_1 \left\lbrace \left\vert \bar{ \Omega}_1\right\vert+\frac{ \left( \Psi \left( t\right) -\Psi \left( 0\right) \right)
^{1-\xi }}{\Gamma \left( \mu
\right) }\int_{0}^{t}\Psi'(s)(\Psi(t)-\Psi(s))^{\mu-1} \left( \Psi \left( s\right) -\Psi \left( 0\right) \right) ^{1-\xi } g_1(s)\,ds\right\rbrace\\
&\quad+ \left( \Psi \left( t\right) -\Psi \left( 0\right) \right)
^{1-\xi } N_1 \\
&\leq M_1 \left\lbrace \left\vert  \bar{ \Omega}_1\right\vert+\frac{ \left( \Psi \left( T\right) -\Psi \left( 0\right) \right)^{\mu+1-\xi }\,}{\Gamma \left( \mu+1\right) }\left\| g_1\right\|_{C_{1-\xi ;\, \Psi }\left( J,\mathbb{R}\right) }\right\rbrace+ \left( \Psi \left( T\right) -\Psi \left( 0\right) \right)^{1-\xi } N_1,
\end{align*}
where
$$
\bar{ \Omega}_1=\frac{y_0- b\, \left( \Psi \left( T\right) -\Psi \left( 0\right) \right)^{1-\xi } \left(   w_1(T, \bar{y}(T), \bar{x}(T))-\,  u_1(T, \bar{y}(T), \bar{x}(T)) \,\mathcal{I}^{\mu;\, \Psi}_{0^+} v_1(T, \bar{y}(T), \bar{x}(T))\right)}{a\,u_1(0, \bar{y}(0+), \bar{x}(0+))+b\, u_1(T, \bar{y}(T), \bar{x}(T))}.
$$

This gives
\begin{equation}\label{eqq677}
\left\Vert y\right\Vert _{C_{1-\xi ;\,\Psi }\left( J,\mathbb{R}\right)  }\leq M_1 \left\lbrace \left\vert  \bar{ \Omega}_1\right\vert+\frac{ \left( \Psi \left( T\right) -\Psi \left( 0\right) \right)^{\mu+1-\xi }\,}{\Gamma \left( \mu+1\right) }\left\| g_1\right\|_{C_{1-\xi ;\, \Psi }\left( J,\mathbb{R}\right) }\right\rbrace+ \left( \Psi \left( T\right) -\Psi \left( 0\right) \right)^{1-\xi } N_1.
\end{equation}

Similarly, we can obtain
\begin{equation}\label{eqq678}
\left\Vert x\right\Vert _{C_{1-\xi ;\,\Psi }\left( J,\mathbb{R}\right)  }\leq M_2 \left\lbrace \left\vert  \bar{ \Omega}_2\right\vert+\frac{ \left( \Psi \left( T\right) -\Psi \left( 0\right) \right)^{\mu+1-\xi }\,}{\Gamma \left( \mu+1\right) }\left\| g_2\right\|_{C_{1-\xi ;\, \Psi }\left( J,\mathbb{R}\right) }\right\rbrace+ \left( \Psi \left( T\right) -\Psi \left( 0\right) \right)^{1-\xi } N_2, 
\end{equation}
where
$$
\bar{ \Omega}_2=\frac{y_0- b\, \left( \Psi \left( T\right) -\Psi \left( 0\right) \right)^{1-\xi } \left(   w_2(T, \bar{y}(T), \bar{x}(T))-\,  u_2(T, \bar{y}(T), \bar{x}(T)) \,\mathcal{I}^{\mu;\, \Psi}_{0^+} v_2(T, \bar{y}(T), \bar{x}(T))\right)}{a\,u_2(0, \bar{y}(0+), \bar{x}(0+))+b\, u_2(T, \bar{y}(T), \bar{x}(T))}.
$$

Using definition of norm on $E$ and the inequalities \eqref{eqq677} and \eqref{eqq678}, we obtain 
\begin{align*}
\left\Vert \left( y, x\right) \right\Vert _E&=\left\Vert y\right\Vert _{C_{1-\xi ;\,\Psi }\left( J,\mathbb{R}\right)  } + \left\Vert x \right\Vert _{C_{1-\xi ;\,\Psi }\left( J,\mathbb{R}\right)}\\
&\leq M_1 \left\lbrace \left\vert  \bar{ \Omega}_1\right\vert+\frac{ \left( \Psi \left( T\right) -\Psi \left( 0\right) \right)^{\mu+1-\xi }\,}{\Gamma \left( \mu+1\right) }\left\| g_1\right\|_{C_{1-\xi ;\, \Psi }\left( J,\mathbb{R}\right) }\right\rbrace+ \left( \Psi \left( T\right) -\Psi \left( 0\right) \right)^{1-\xi } N_1\\
&+ M_2 \left\lbrace \left\vert  \bar{ \Omega}_2\right\vert+\frac{ \left( \Psi \left( T\right) -\Psi \left( 0\right) \right)^{\mu+1-\xi }\,}{\Gamma \left( \mu+1\right) }\left\| g_2\right\|_{C_{1-\xi ;\, \Psi }\left( J,\mathbb{R}\right) }\right\rbrace+ \left( \Psi \left( T\right) -\Psi \left( 0\right) \right)^{1-\xi } N_2\\
&= M_1  \left\vert  \bar{ \Omega}_1\right\vert + M_2  \left\vert  \bar{ \Omega}_2\right\vert + \left( \Psi \left( T\right) -\Psi \left( 0\right) \right)^{1-\xi } \left[ N_1+N_2\right] \\
&+\frac{ \left( \Psi \left( T\right) -\Psi \left( 0\right) \right)^{\mu+1-\xi }\,}{\Gamma \left( \mu+1\right) }\left[ M_1\,\left\| g_1\right\|_{C_{1-\xi ;\, \Psi }\left( J,\mathbb{R}\right) } + M_2\,\left\| g_2\right\|_{C_{1-\xi ;\, \Psi }\left( J,\mathbb{R}\right) }\right] \\
&=R^*
\end{align*}

This implies, $ \left( y, x\right)\in S^*$.
\\ \\
\noindent\textbf{Step 4:} To prove $K M^* + L <1$, where $M^*=\sup\left\lbrace \left\|B\left( y, x\right)  \right\| _E: \left( y, x\right)\in  S^* \right\rbrace $.

Here,
\begin{align*}
M^*&=\sup\left\lbrace \left\|B\left( y, x\right) \right\| _E: \left( y, x\right)\in  S^* \right\rbrace\\
&=\sup\left\lbrace \left\|\left( B_1\left( y, x\right), B_2\left( y, x\right)\right)  \right\| _E: \left( y, x\right)\in  S^* \right\rbrace\\
&=\sup\left\lbrace ~\left\| B_1\left( y, x\right)\right\|_{C_{1-\xi ;\,\Psi }\left( J, \mathbb{R}\right)} + \left\|  B_2\left( y, x\right) \right\|_{C_{1-\xi ;\,\Psi }\left( J, \mathbb{R}\right)}: \left( y, x\right) \in  S^*\,~ \right\rbrace\\
&\leq \left\vert \Omega_1\right|+\left\vert \Omega_2\right| +\frac{\left( \Psi \left( T\right) -\Psi \left( 0\right) \right)^{\mu+1-\xi }}{\Gamma \left( \mu+1   \right) } \left(  \left\| g_1\right\|_{C_{1-\xi ;\, \Psi }\left( J, \mathbb{R}\right) } + \left\| g_2\right\|_{C_{1-\xi ;\, \Psi }\left( J, \mathbb{R}\right) }\right).
\end{align*}
Using the condition \eqref{eqq672}, we observe that 
\begin{small}
\begin{align*}
&K\, M^* + L\\
&\leq \left(\sum_{i=1}^{2} \sigma_i\right) \left[ \sum_{i=1}^{2}\left\vert \Omega_i\right|+\frac{\left( \Psi \left( T\right) -\Psi \left( 0\right) \right)^{\mu+1-\xi }}{\Gamma \left( \mu+1   \right) } \left(  \sum_{i=1}^{2}\left\| g_i\right\|_{C_{1-\xi ;\, \Psi }\left( J,\mathbb{R}\right) }\right)  \right]  +\sum_{i=1}^{2} \delta_i<1.
\end{align*}
\end{small}
From \textbf{steps 1 to 4}, it follows that all the conditions of Lemma \ref{hyb3} are fulfilled. Consequently,  by applying Lemma \ref{hyb3}, the operator equation $(y, x)=A(y, x)\,B(y, x)
+ C(y, x)$ has a  solution in  $ S^*$. Hence, the {\rm BVPs} for coupled system of hybrid {\rm FDEs} \eqref{eq663}-\eqref{eq664}  has a solution in $ C_{1-\xi ;\, \Psi }\left( J,\mathbb{R}\right)\times C_{1-\xi ;\, \Psi }\left( J,\mathbb{R}\right) $.
\end{proof}

\begin{rem}
\begin{enumerate}[topsep=0pt,itemsep=-1ex,partopsep=1ex,parsep=1ex]
\item If $a=1$ and $ b=0$, then the  {\rm BVP} for coupled system of  $\Psi$-Hilfer hybrid {\rm FDEs}  \eqref{eq663}-\eqref{eq664}  reduces to the  {\rm IVP} for coupled system of  $\Psi$-Hilfer hybrid {\rm FDEs}.
\item If $a=1, b=-1$ and $y_0=0$, then the  {\rm BVP} for coupled system of  $\Psi$-Hilfer hybrid {\rm FDEs}  \eqref{eq663}-\eqref{eq664}  reduces to the  periodic {\rm BVP} for coupled system of  $\Psi$-Hilfer hybrid {\rm FDEs}.
\item If $a=1, b=1$ and $y_0=0$, then the  {\rm BVP} for coupled system of  $\Psi$-Hilfer hybrid {\rm FDEs}  \eqref{eq663}-\eqref{eq664}  reduces to the  anti-periodic {\rm BVP} for coupled system of  $\Psi$-Hilfer hybrid {\rm FDEs}.

\end{enumerate}
\end{rem}
%%%%%%%%%%%%%%%%%%%%%%%%%%%%%%%%%%%%%%%%%%%%%%%%%%%%%%%%%%%%%%%%%%%%%%%%%%%%%%%%%%%%%%%%%%%%%%%%%%%%%%%%%%%%%%%%%%%%%%%%%%%%%%%%%%%%%%%%%%%%%%%%
\section{Example}
In this section, to illustrate the obtained results, we provide two examples. To obtain exact numerical values, we   take  $\Psi(t)=t$ and $\nu=1$. Example \ref{ex1} illustrate the Theorem \ref{tha63.2} and   Example \ref{ex2} illustrate the  Theorem \ref{tha64.2}
\begin{ex}\label{ex1}
Consider the {\rm IVP} for coupled hybrid {\rm FDEs}  involving Caputo fractional derivative 
\begin{align}\label{eqq651}
\begin{cases}
& ^C \mathcal{D}^{\frac{1}{2}}_{0^+}\left[ \frac{\frac{7}{97}\left( y(t)-t\left[  y(t)+1-\frac{2}{t}\right] \right) }{\frac{1}{10} \left( t y(t)-2\right)} \right] 
   =\frac{x^2(t)}{1+x^2(t)}-\frac{\frac{3\sqrt{\pi}}{4} \,t^\frac{1}{2} \, \mathcal{I}^{\frac{1}{2}}_{0^+}x(t)}{\frac{3\sqrt{\pi}}{4}\,t^\frac{1}{2} \, \mathcal{I}^{\frac{1}{2}}_{0^+}x(t)+1},~a.e. ~t \in  (0,\,1],  ~\\
 &  y(0)=0,
\end{cases}
\end{align}
\begin{align}\label{eqq652}
\begin{cases}
   & ^C \mathcal{D}^{\frac{1}{2}}_{0^+}\left[ \frac{\frac{7}{97}\left(  x(t)-t\left[  x(t)+1-\frac{2}{t}\right] \right) }{\frac{1}{10} \left( t x(t)-2\right)} \right] 
      =\frac{y^2(t)}{1+y^2(t)}-\frac{\frac{3\sqrt{\pi}}{4} \,t^\frac{1}{2} \, \mathcal{I}^{\frac{1}{2}}_{0^+}y(t)}{\frac{3\sqrt{\pi}}{4} \,t^\frac{1}{2} \, \mathcal{I}^{\frac{1}{2}}_{0^+}y(t)+1},~a.e. ~t \in  (0,\,1],  ~\\
   &  \,x(0)=0.
\end{cases}
\end{align}
\end{ex}
Comparing the problem \eqref{eqq651} and \eqref{eqq652} with the coupled system of hybrid FDEs  \eqref{eqq611}-\eqref{eqq612}, we obtain
$$
\mu=\frac{1}{2},\, \nu=1,\, \xi=1,\,\Psi(t)=t,\, ~ y_0=0,~J=[0, 1],
$$
\begin{align*}
 u(t, y(t))&=\frac{1}{10}\left( t y(t)-2\right),\\
 v\left( t, y(t), k\,\mathcal{I}^{\mu}_{0^+}y(t)\right) &=\frac{y^2(t)}{1+y^2(t)}-\frac{\frac{3\sqrt{\pi}}{4}t^\frac{1}{2}  \mathcal{I}^{\frac{1}{2}}_{0^+}y(t)}{\frac{3\sqrt{\pi}}{4}t^\frac{1}{2}  \mathcal{I}^{\frac{1}{2}}_{0^+}y(t)+1},
\end{align*}
and
\begin{align*}
w(t, y(t))= \frac{7}{97}\,t\left[  y(t)+1-\frac{2}{t}\right].
\end{align*}
Note that for $\nu=1$, we have $ \xi=1$ and in  this case the space  $C_{1-\xi ;\, \Psi } \left( J, \R \right)$ is reduces to the space of continuous functions $C \left( J, \R \right)$.

Next, we prove that $u, v$ and $w$ satisfies the hypotheses (H1) and (H2) of the Theorem \ref{tha63.2}. For any $p, q \in \mathbb{R}$ and $t \in J$, we have
$$
|u(t, p)  - u(t, q)|=\left| \frac{1}{10}\left(  t\,p-2\right)  - \frac{1}{10}\left(   t\,q-2\right) \right| =  \frac{1}{10}\, t \left|\, p- \, q \right|\leq  \frac{1}{10} \left|\, p- \, q \right|,
$$
and  
$$
|w(t, p)-w(t, q)|=\left| \frac{7}{97}  t\left[  p+1-\frac{2}{t}\right]- \frac{7}{97}\, t\left[  q+1-\frac{2}{t}\right]\right| =  \frac{7}{97}\, t\left|\, p- \, q \right| \leq  \frac{7}{97} \left|\, p- \, q \right|.
$$
Thus the Lipschitz constants for the functions  $u$ and $w$ respectively are $\sigma= \frac{1}{10}=0.1$ and $\delta= \frac{7}{97}=0.07216$. Further, for any $p, q \in \mathbb{R}$ and $t \in J$, we have
\begin{align}
\left| v(t, p, q)\right| =\left| \frac{p^2}{1+p^2}-\frac{\frac{3\sqrt{\pi}}{4}t^\frac{1}{2}  q}{\frac{3\sqrt{\pi}}{4}t^\frac{1}{2} q+1}\right| \leq \left| \frac{p^2}{1+p^2}\right| +\left| \frac{\frac{3\sqrt{\pi}}{4}t^\frac{1}{2}  q}{\frac{3\sqrt{\pi}}{4}t^\frac{1}{2} q+1}\right|\leq 2=:g(t).
\end{align}
Therefore, 
\begin{align*}
4\, \sigma \left\lbrace \left| \frac{y_0\,}{u(0,y(0+))}\right| + \frac{1 }{\Gamma(\mu+1)}\left\| g\right\|_{C\left( J,\,\R\right) } \right\rbrace +\delta&=4\, \frac{1}{10} \left\lbrace \left| \frac{0\,}{u(0,y(0+))}\right| + \frac{1 }{\Gamma(\frac{1}{2}+1)}2 \right\rbrace +\frac{7}{97}\\
&\approx 0.9748<1.
\end{align*}
This implies the condition \eqref{616} is verified.
Since all the conditions of Theorem \ref{tha63.2} are satisfied, the coupled system of hybrid FDEs involving Caputo derivative \eqref{eqq651} and \eqref{eqq652} has at least one coupled solution in the space $C\left( J,\,\R\right) \times C\left( J,\,\R\right) $.
One can verify that $$(x,y)(t)= (t,t), ~t\in J $$ is a solution of the coupled system of hybrid FDEs \eqref{eqq651} and \eqref{eqq652}. 

% % % % % % % % % % % % % % % % % % % % % % % % % % % % % % % % % % % % %

Next, we  provide an example to illustrate the  Theorem \ref{tha64.2}.
\begin{ex}  \label{ex2}
Consider the {\rm BVP} for coupled hybrid {\rm FDEs}  involving Caputo fractional derivative 
\begin{align} \label{p54}
\begin{cases}
& ^C \mathcal{D}^{\frac{1}{3}}_{0^+}\left[ \frac{\frac{3}{17}\left( y(t)-\frac{17}{21}\left[  t\,y(t)+\frac{21}{17}x(t)+1\right] \right) }{\frac{1}{99} \left( \frac{ty(t)}{3}+\frac{tx(t)}{2}+\frac{5}{6}\right)} \right] 
   =\frac{e^{-t^2}}{97}\left[ \frac{y(t)}{2+y(t)}-\frac{x(t)}{2+x(t)}\right] ,\,~a.e. ~t \in  (0,\,1],  ~\\
 & 3\, y(0)+y(1)=1,
\end{cases}
\end{align}

\begin{align}\label{p55}
\begin{cases}
& ^C \mathcal{D}^{\frac{1}{3}}_{0^+}\left[ \frac{ x(t)-\left[ \frac{t}{10} \left( y(t)+x(t)+10\right) +2\right]  }{\frac{1}{98}\left[ \frac{ty(t)}{5}+ tx(t)+12\right] } \right] 
      =\frac{2^{-t}}{87}\left[ \frac{t^2-y(t)x(t)}{1-y(t)x(t)}\right],  \,~a.e. ~t \in  (0,\,1],  ~\\
 & 3\, x(0)+x(1)=1.
\end{cases}
\end{align}
\end{ex}
Comparing the  problem  \eqref{p54}-\eqref{p55} with the BVP for $\Psi$-Hilfer hybrid FDEs  \eqref{eq663}-\eqref{eq664}, we obtain
$$
\mu=\frac{1}{3},\, \nu=1,\, \xi=1,\,\Psi(t)=t,\, a=3, b=1, ~ y_0=1,~J=[0, 1],
$$
\begin{align*}
 u_1(t, y(t), x(t))&=\frac{1}{99} \left( \frac{ty(t)}{3}+\frac{tx(t)}{2}+\frac{5}{6}\right),~
u_2(t, y(t), x(t))=\frac{1}{98}\left[ \frac{ty(t)}{5}+ tx(t)+12\right],\\
v_1(t, y(t), x(t))&=\frac{e^{-t^2}}{97}\left[ \frac{y(t)}{2+y(t)}-\frac{x(t)}{2+x(t)}\right],~
v_2(t, y(t), x(t))=\frac{2^{-t}}{87}\left[ \frac{t^2-x(t)y(t)}{1-x(t)y(t)}\right],\\
w_1(t, y(t), x(t))&=\frac{1}{7}\left[  t\,y(t)+\frac{21}{17}x(t)+1\right] ~\mbox{and}\quad w_2(t, y(t), x(t))= \frac{t}{10}\left( y(t)+x(t)+10\right) +2.
\end{align*}
We prove that $u_i, v_i$ and $w_i(i=1,2)$ satisfies the hypotheses of the Theorem \ref{tha64.2}. Note that
\begin{align*}
 u_1(0, y(0),x(0))&=\frac{5}{594},~~u_2(0, y(0), x(0))=\frac{6}{49},\\
 ~u_1(1, y(1), x(1))&=\frac{5}{297},~~u_2(1, y(1), x(1))=\frac{33}{245},\\
v_1(1, y(1), x(1))&=0,~~
v_2(1, y(1), x(1))=0,\\
w_1(1, y(1), x(1))&=\frac{55}{119},~~\mbox{and}~~ w_2(1, y(1), x(1))= \frac{32}{10}.
\end{align*}
For any $p,\bar{p}, q, \bar{q} \in \mathbb{R}$ and $t \in J$, we have
\begin{align*}
|u_1(t, p, q)  - u_1(t, \bar{p},  \bar{q})|&=\left|\frac{1}{99} \left( \frac{t p}{3}+\frac{t q}{2}+\frac{5}{6}\right)  - \frac{1}{99} \left( \frac{t \bar{p}}{3}+\frac{t \bar{q}}{2}+\frac{5}{6}\right) \right|\\ &\leq  \frac{1}{99} \left\lbrace \left|\, p- \,  \bar{p} \right|+\left|\, q- \,  \bar{q} \right|\right\rbrace;
\end{align*} 
\begin{align*}
|u_2(t, p, q)  - u_2(t, \bar{p},  \bar{q})|&=\left|\frac{1}{98}\left[ \frac{t\,p}{5}+ t\,q+12\right] - \frac{1}{98}\left[ \frac{t\,\bar{p}}{5}+ t\,\bar{q}+12\right] \right| \\
&\leq \frac{1}{98} \left\lbrace \left|\, p- \,  \bar{p} \right|+\left|\, q- \,  \bar{q} \right|\right\rbrace;
\end{align*}
\begin{align*}
|w_1(t, p, q)  - w_1(t, \bar{p},  \bar{q})|&=\left|\frac{1}{7}\left[  t\,p+\frac{21}{17}q+1 \right]  - \frac{1}{7}\left[  t\,\bar{p} + \frac{21}{17}\bar{q}+1 \right] \right|\\ &\leq  \frac{2}{7} \left\lbrace \left|\, p- \,  \bar{p} \right|+\left|\, q- \,  \bar{q} \right|\right\rbrace;
\end{align*}
and
\begin{align*}
|w_2(t, p, q)  - w_2(t, \bar{p},  \bar{q})|&=\left|\left[ \frac{t}{10} \left( p+q+10\right) +2\right] - \left[ \frac{t}{10} \left( \bar{p}+\bar{q}+10\right) +2\right]\right|\\
& \leq \frac{1}{10} \left\lbrace \left|\, p- \,  \bar{p} \right|+\left|\, q- \,  \bar{q} \right|\right\rbrace.
\end{align*}
We have proved that the functions $u_i$ and $w_i ~(i=1,2)$ satisfies the Lipschitz type conditions. The Lipschitz  constants respectively are
$$\sigma_1= \frac{1}{99},~\sigma_2= \frac{1}{98},~\delta_1= \frac{2}{7},~\delta_2= \frac{1}{10}.$$  Next, for any $p, q \in \mathbb{R}$ and $t \in J$, we have
\begin{align}
\left| v_1(t, p, q)\right| =\left| \frac{e^{-t^2}}{97}\left[ \frac{p}{2+p}-\frac{q}{2+q}\right]\right|\leq \frac{2}{97}:= g_1(t)
\end{align}
and 
\begin{align}
\left| v_2(t, p, q)\right| =\left|\frac{2^{-t}}{87}\left[ \frac{t^2-p\,q}{1-x(t)y(t)p\,q}\right]\right|\leq \frac{1}{87}:=g_2(t). 
\end{align}
 Using the above calculated values in Eqs.\eqref{ie61} and \eqref{ie62}, we obtain 
 $$\Omega_1=\frac{38016}{2975},~ \Omega_2=\frac{-539}{123}.$$
Therefore, 
\begin{align*}
&\left(\sum_{i=1}^{2} \sigma_i\right) \left[ \sum_{i=1}^{2}\left\vert \Omega_i\right|+\frac{\left( \Psi \left( T\right) -\Psi \left( 0\right) \right)^{\mu+1-\xi }}{\Gamma \left( \mu+1   \right) } \left(  \sum_{i=1}^{2}\left\| g_i\right\|_{C_{1-\xi ;\, \Psi }\left( J,\mathbb{R}\right) }\right)  \right]  +\sum_{i=1}^{2} \delta_i\\
&=\left( \frac{1}{99}+\frac{1}{98}\right)\left[ \frac{38016}{2975}+\frac{539}{123}+\frac{1}{\Gamma\left(\frac{4}{3} \right) } \left(\frac{2}{97}+\frac{1}{87} \right) \right]+\left( \frac{2}{7}+\frac{1}{10}\right)  \\
&\approx 0.7348<1.
\end{align*}
Hence, the condition \eqref{eqq672} is verified. 

Since all the conditions of Theorem \ref{tha64.2} are satisfied,  the {\rm BVPs} for coupled system of hybrid {\rm FDEs} \eqref{p54} and \eqref{p55}  has at least one solution in the space $C\left( J,\,\R\right) \times C\left( J,\,\R\right) $. One can verify that $$(x,y)(t)= (t,t), ~t\in J, $$ is a solution of the {\rm BVPs} for coupled system of hybrid {\rm FDEs} \eqref{p54} and \eqref{p55}.

%%%%%%%%%%%%%%%%%%%%%%%%%%%%%%%%%%%%%%%%%%%%%%%%%%

\section*{Conclusion}
The existence  of solution  of IVP and  {\rm BVP} for coupled system of $\Psi$-Hilfer hybrid {\rm FDEs} is achieved by using fixed point theorem for the three operators. It is observed that the existence result obtained for the BVPs of coupled system of   $\Psi$-Hilfer hybrid {\rm FDEs}  includes the study of coupled system for IVP $(a=1, b=0)$, periodic BVP $(a=1, b=-1, y_0=0)$ and anti-periodic BVP $(a=1, b=1, y_0=0)$ involving $\Psi$-Hilfer  fractional derivative. Further, we have provided an examples to illustrate the validity of our outcomes.

As presented in the body of the paper, we have successfully obtained the main results of this paper. However, some open problems that still need to be answered that involve the theory of fractional hybrid differential equations, namely:
\begin{enumerate}[topsep=0pt,itemsep=-1ex,partopsep=1ex,parsep=1ex]
\item Would it be possible to discuss the existence of mild solutions to Eqs.(\ref{eqq611})-(\ref{eqq612}) problems? What are the necessary and sufficient conditions for this to happen?

\item As a consequence of item 1, we can ask about the uniqueness and stability of mild solutions.
    
\item Is it possible to guarantee solutions involving sectorial and almost-sectorial operators?
    
\end{enumerate}

There are some questions that need to be answered as outlined above, which will enrich the theory. Other questions about fractional hybrid differential equations, are being discussed and future works are being elaborated, which allowed to answer these questions and others that are still open.

%%%%%%%%%%%%%%%%%%%%%%%%%%%%%%%%%%%%%%%%%%%%%%%%%%%%%%%%%%%%%%%%%%%%%%%%%%%%%%%%%%%%%%%%%%%%%%%%%%%%%%%%%%%%%%%%%%%%%%%%%%%%%%%%%%%%%%%%%%%%%%%%%%%%%%%
\section*{Acknowledgment}
The second author  acknowledges the Science and Engineering Research Board (SERB), New Delhi, India for the Research Grant (Ref: File no. EEQ/2018/000407).

\section*{Declaration of interests}
The authors declare that they have no known competing financial interests or personal relationships that could have appeared to influence the work reported in this paper.

\section*{Credit author statement}
All authors contributed equally to this work.

%%%%%%%%%%%%%%%%%%%%%%%%%%%%%%%%%%%%%%%%%%%%%%%%%%%%%%%%%%%%%%%%%%%%

\end{document}